\newcommand{\R}{\mathbb{R}}
\newcommand{\rn}{\mathbb{R}^N}
\newcommand{\G}{\mathbb{G}}
\newcommand{\cH}{\mathscr{H}}
\newcommand{\cg}{\mathrm{g}}
\newcommand{\Om}{\Omega}
\newcommand{\nh}{\nabla_\cH}
\newcommand{\Hn}{\mathbb H^n}
\newcommand{\p}{\partial}
\newcommand{\la}{\lambda}
\newcommand{\ve}{\varepsilon}
\newcommand{\Z}{\mathscr Z}
\newcommand{\vf}{\varphi}
\newtheorem{defi}{Definition}[section]
\newtheorem{lemma}{Lemma}[section]
\newtheorem{teo}{Theorem}[section]
\newtheorem{rem}{Remark}[section]
\newtheorem{prop}{Proposition}[section]
\begin{document}

\title{Starshapedeness for fully-nonlinear equations in Carnot groups}

\subjclass[2010]{35D40, 35E10, 35H20, 35R03}
\keywords{Starshapedness, fully nonlinear equations, Carnot groups, viscosity solutions}

\author{Federica Dragoni}
\address{School of Mathematics\\
Cardiff University\\
Cardiff CF2 4AG
WALES} \email[Federica Dragoni]{DragoniF@cardiff.ac.uk}

\author{Nicola Garofalo}
\address{Dipartimento di Ingegneria Civile, Edile e Ambientale (DICEA) \\ Universit\`a di Padova\\ 35131 Padova, ITALY}
\email[Nicola Garofalo]{nicola.garofalo@unipd.it}

\author{Paolo Salani}
\address{Department of Mathematics and Computer Science\\
Universit\`a di Firenze\\
50134 Florence,
ITALY}\email[Paolo Salani]{paolo.salani@unifi.it}

\thanks{The first author was partially supported by the EPSRC Grant ``Random Perturbations of ultra-parabolic PDEs under rescaling''. \\
The second author was supported in part by a Progetto SID (Investimento Strategico di Dipartimento) ``Non-local operators in geometry and in free boundary problems, and their connection with the applied sciences", University of Padova, 2017.\\
The third author was partially supported by GNAMPA of INdAM and by a ``Progetto Strategico di Ateneo'', University of Firenze.
}

\maketitle


\begin{abstract}
In this paper we establish the starshapedness of the level sets of the capacitary potential of a large class of fully-nonlinear equations for condensers in Carnot groups. 
\end{abstract}



 \section{Introduction}\label{S:intro} 

The study of the geometric properties of solutions to elliptic and parabolic problems such as starshapedness, convexity and symmetry properties is a classical subject of investigation. For instance, it is well-known that the capacitary potential of a starshaped ring in $\rn$ has level sets which are all starshaped. The objective of this paper is to establish a similar result for the capacitary potential of a wide class of fully-nonlinear equations and for condensers in Carnot groups, once a natural notion of starshapedness has been introduced. Our main result, Theorem \ref{teoremaPuntoGenerico}, encompasses operators as diverse as:
\begin{itemize}
\item the horizontal Laplacean $\Delta_\cH$ in \eqref{hlap} below;
\item the horizontal $q$-Laplacean $\Delta_{\cH,q}$ in \eqref{hplap} with $1<q<\infty$;
\item the horizontal $\infty$-Laplacean $\Delta_{\cH,\infty}$ in \eqref{infty},
\end{itemize}
just to name a few basic examples.

\medskip

To introduce the relevant geometric framework we consider a stratified nilpotent Lie group $(\G,\circ)$ of step $k\in \mathbb N$ with  group law $\circ$. This means that $\G$ is a simply-connected real Lie group whose Lie algebra  $\cg$ admits a stratification 
\[
\mathrm{g}=\mathrm{g}_1\oplus \dots\oplus\mathrm{g}_k
\]
 which is $k$-nilpotent, see Definition \ref{CarnotgroupDefi} in Section \ref{S:cg} below. The first layer $\cg_1$ of the Lie algebra is called the \emph{horizontal} layer. It plays a special role since it bracket-generates the whole Lie algebra.  
Such groups naturally arise as approximating ``tangent spaces'' to the sub-Riemannian spaces associated with a H\"ormander type operator in $\rn$
\[
\mathscr L = \sum_{i,j=1}^m X_j^2 + X_0,
\]
where $X_0, X_1,...,X_m$ are $C^\infty$ vector fields satisfying the finite rank condition on the Lie algebra in \cite{Ho}. This guiding idea was at the basis of a visionary program which E. Stein presented in his address at the 1970 International Congress of Mathematicians in Nice \cite{Snice}, and that culminated in the celebrated joint works with L. Rotchschild \cite{RS} on the \emph{lifting theorem}, and with Nagel and Wainger on the size of metric balls and fundamental solution estimates, see \cite{NSW}. 
Stratified nilpotent Lie groups have also been known as \emph{Carnot groups} after the foundational work of Gromov, see e.g. \cite{Gro1} and \cite{Gro2}, and Pansu, see \cite{P}.  In this paper, we will routinely use this latter denomination.

\medskip

Given a non-Abelian Carnot group $(\G,\circ)$ (this means that we assume that its step is $k>1$), one can use the left-translations $L_p(p') = p\circ p'$ or the right-translations $R_p(p') = p'\circ p$ to move about $\G$ and to introduce differential operators. We will restrict the attention to the family of left-translations $\{L_p\}_{p\in \G}$, and indicate with $\cH\subset T\G$ the corresponding subbundle of the tangent bundle whose fiber at a point $p\in \G$ is given by $\cH_p = dL_p(\cg_1)$, where $dL_p$ indicates the differential of $L_p:\G\to \G$.  $\cH$ is called the \emph{horizontal} bundle. 
With a given orthonormal basis $\{e_1,...,e_m\}$ of $\mathrm{g}_1$ we can associate corresponding $C^\infty$ left-invariant vector fields $\{X_1,...,X_m\}$ by the formula
\begin{equation}\label{xj}
X_j(p) = dL_p(e_j),\ \ \ \ j=1,...,m.
\end{equation}
By (i) and (ii) in Definition \ref{CarnotgroupDefi} below the $X_j$'s and their commutators of order up to $k$ generate the whole Lie algebra of left-invariant vector fields on $\G$. Given a function $f\in C^\infty(\G)$, we denote by 
\begin{equation}\label{hg}
\nabla_\cH f = \sum_{j=1}^m X_j f X_j
\end{equation}
its \emph{horizontal gradient}. We note that \eqref{hg} represents the projection of the Riemannian gradient $\nabla f$ onto the horizontal bundle $\mathscr H$. We henceforth formally identify $\nh f$  with the $m$-dimensional vector $(X_1 f,...,X_m f)^T\in\R^m$.
We also indicate with 
\begin{equation}\label{hessian}
\nabla^2_\cH f =  \left[\frac{X_i X_j f + X_j X_i f}{2}\right]_{i,j=1,...,m}
\end{equation}
the symmetrized \emph{horizontal Hessian}  of $f$ with respect to the orthonormal basis $\{e_1,...,e_m\}$ of $\cg_1$. 

\medskip

In addition to the left-translations $\{L_p\}_{p\in \G}$ a Carnot group is also equipped with a one-parameter family of non-isotropic dilations $\{\delta_\lambda\}_{\lambda>0}$ which at the level of the Lie algebra are prescribed by formally assigning the weight $j$ to the $j$-th layer $\cg_j$ in its stratification, see \eqref{dilg} and \eqref{dilG} below. Using the dilations $\{\delta_\lambda\}_{\lambda>0}$ and the left-translations $\{L_p\}_{p\in \G}$ the following notion of sharshapedness in a Carnot group $\G$ was introduced in \cite{D-G} and \cite{DG2}.

\begin{defi}\label{D:star}
Let $\Om\subset \G$ be a connected set containing the group identity $e$. We say that $\Om$ is \emph{starshaped} (with respect to $e$) if
\[
\Om = \underset{0\le \la \le 1}{\bigcup} \delta_\la(\Om).
\]
\end{defi}

Given $p_0\in \Om$, we say that $\Om$ is starshaped with respect to $p_0$ if $L_{p_0^{-1}}(\Om)$ is starshaped. This is equivalent to saying that
\[
\Om = \underset{0 \le \la \le 1}{\bigcup} L_{p_0}\left(\delta_\la\left(L_{p_0^{-1}}(\Om)\right)\right),
\]
i.e., for every $p\in \Om$ and $0 \le \lambda \leq 1$ one must have
\begin{equation}\label{Zgo}
\delta^{p_0}_\lambda(p) \overset{def}{=} p_0 \circ \delta_\lambda \left(p_0^{-1} \circ p\right) \in \Om.
\end{equation}

We call $\{\delta^{p_0}_\lambda\}_{\lambda>0}$ the dilations centered at the point $p_0$. It is worth emphasizing here that although this definition is consistent both with the standard notion of starshapedness in $\R^N$ and with Pansu's notion in \cite{P} of differentiability at a point $p_0$ of a map between Carnot groups $f:\G\to \G'$, some surprising and unsettling phenomena can occur. For instance, in the unit gauge ball centered at $e$ in the Heisenberg group $\Hn$,
\[
B_1 = \{(z,t)\in \Hn\mid |z|^4 + 16 t^2 < 1\},
\]
there exists a continuum of points $\{p_0(\ve)\mid 0<\ve<\ve_0\}$  such that, for every fixed $\ve \in (0,\ve_0)$, $B_1$ is \emph{not} starshaped with respect to $p_0(\ve)$, see Proposition \ref{P:gaugeb} below. This negative phenomenon of course has no equal in the Euclidean $\R^N$, where every convex set (and therefore any Euclidean ball) is starshaped with respect to any of its points.

 Given a Carnot group $\G$, we consider a \emph{condenser} $(\Omega_0,\overline \Omega_1)$,  where $\Omega_i\subset \G$, $i=0,1$, are connected bounded open sets such that 
\[
\overline\Omega_1\subset\Omega_0.
\]
We henceforth indicate with $\Omega = \Omega_0\setminus \overline \Omega_1$ the ring between $\Om_0$ and $\Om_1$. 
We indicate with $\mathscr S_m(\R)$ the space of all $m\times m$ symmetric matrices with real coefficients, and we consider a function 
\[
\mathscr F:
\overline \Om_0\times \R\times \R^m\times \mathscr S_m(\R) \to \R,
\]
satisfying the following assumptions:
\begin{itemize}
\item[(i)] $\mathscr F$ is \emph{proper}, which means that given $p\in \Om, \xi\in \R^m$ and $M\in \mathscr S_m(\R)$ one has
\begin{equation}\label{proper}
\mathscr F(p,r,\xi,M)\leq \mathscr F(p,s,\xi,M), \quad \forall\, r\geq s;
\end{equation}
\item[(ii)]  $\mathscr F$ is \emph{degenerate elliptic}, which means that given $p\in \Om, r\in \R$ and $\xi\in \R^m$ one has
\begin{equation}\label{DegenerateElliptic}
\mathscr F(p,r,\xi,M)\leq \mathscr F(p,r,\xi,N), \; \forall\, M, N\in \mathscr S_m(\mathbb R) \ \text{such that}\  M\le N.
\end{equation}
\item[(iii)] $\mathscr F$ is \emph{stable under the scalings centered at the point $p_0\in\Omega_1$},
  in the sense that
\begin{equation}\label{structuralAssumptionF}
 \mathscr F(p,r,\xi,M)\geq 0
 \Rightarrow
\quad
\mathscr F\left(
\big(\delta_{\la}^{p_0}\big)^{-1}(p),r,{\la}\,\xi,{\la^2}M\right)\geq 0
\end{equation}
for all $(p,r,\xi,M)$ and $\la\geq 1$ such that $p\in\Omega$ and $\big(\delta^{p_0}_\la\big)^{-1}(p)\in\Omega$, 
where $\big(\delta_{\la}^{p_0}\big)^{-1}$ is the inverse of $\delta_{\la}^{p_0}$ (see \eqref{Zgo}),  i.e.
 $\big(\delta_{\la}^{p_0}\big)^{-1}(p)=p_0\circ \delta_{\frac{1}{\lambda}}\big(p_0^{-1}\circ p\big)$.
\end{itemize}
 \begin{rem}
Note that assumption (iii) is for example satisfied whenever 
$$
 F\left(
\big(\delta_{\la}^{p_0}\big)^{-1}(p),r,{\la}\,\xi,{\la^2}M\right)
=
\lambda^{\alpha}F(p,r,\xi,M)
$$
for some $\alpha\geq 0$.
\end{rem}

The purpose of the present work is to investigate the geometry of viscosity solutions to the following fully nonlinear capacitary problem
\begin{equation}\label{HorCapacity_intro}
\begin{cases}
\mathscr F\big(p,f,\nh f, \nh^2 f\big) = 0\ \ \ \quad\ \  \ \ \ \ \ \ \ \ \text{in}\ \Omega,
\\
0\leq f\leq 1\ \ \ \ \ \ \  \ \ \ \ \ \ \ \ \ \ \ \ \ \ \ \ \ \ \ \ \ \ \ \ \ \ \ \ \ \text{in}\ \Omega,
\\
f=0\ \ \ \ \ \ \ \ \ \ \ \ \ \ \ \ \ \ \ \ \ \ \ \ \ \ \ \ \ \ \ \ \ \ \ \   \quad\ \ \  \text{on}\ \partial \Omega_0,
\\
f=1\ \ \ \ \ \ \ \ \ \ \ \ \ \ \ \ \ \ \ \ \ \ \ \ \ \ \ \ \ \ \ \ \ \ \ \ \  \quad\ \  \text{in}\ \overline{\Omega}_1.
\end{cases}
\end{equation}
For the notion of viscosity solution to \eqref{HorCapacity_intro} we refer the reader to Definition \ref{Viscosity solutions} below.
Hereafter, we assume that $f$ be defined in the whole of $\overline{\Om}_0$ after we extend it with $1$ in $\overline{\Om}_1$.
In addition to the hypothesis \eqref{proper}, \eqref{DegenerateElliptic} and \eqref{structuralAssumptionF} above we will assume that the following comparison principle be valid.

\medskip

\noindent 
\emph{Let $f, g \in C(\overline \Om)$ respectively be a viscosity subsolution and a viscosity supersolution of \eqref{HorCapacity_intro}
. Then},
 \begin{equation}\label{CP}
 f\leq g\ \ \textrm{on}\ \ \partial \Omega\quad \Longrightarrow\quad 
 f\leq g\ \ \textrm{in}\ \ \overline{ \Omega}.
\end{equation}
Note that, whenever constant functions are solutions of the pde in \eqref{HorCapacity_intro}, then the comparison principle implies the maximum principle, which gives in particular that 
$
0\leq f\leq 1 $ on $\overline{ \Omega}$ under the given boundary conditions.

\medskip

The following theorem is the main result in this paper.

\begin{teo}[Starshapedness] \label{teoremaPuntoGenerico} 
Let $\G$ be a Carnot group and let $(\Om_0,\overline \Om_1)$ be a condenser in $\G$. Assume that $\Omega_i$, $i = 0, 1$, be starshaped with respect to a point $p_0\in \Om_1$. 
Suppose that  $\mathscr F :\overline \Om_0\times \R\times \R^m\times \mathscr S_m\to \R$ 
satisfy the hypothesis \eqref{proper},
\eqref{DegenerateElliptic}, \eqref{structuralAssumptionF} and \eqref{CP} above, and that $f\in C(\overline{\Omega})$ be a viscosity solution of problem \eqref{HorCapacity_intro}. Then, for any $\ell\in (0,1)$ the superlevel set
\[
\Omega_\ell= \{p\in \overline{\Om}_0\,|\, f(p) \geq \ell\}
\]
is starshaped with respect to $p_0$.
\end{teo}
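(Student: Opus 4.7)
The plan is to reduce the starshapedness of every superlevel set $\Omega_\ell$ to the pointwise inequality $f\ge f\circ\delta_\mu^{p_0}$, valid on the shrunken domain where both sides make sense, for every $\mu\ge 1$. Writing $\lambda=1/\mu\in(0,1]$ and $q=\delta_\mu^{p_0}(p)$, this is equivalent to $f(\delta_\lambda^{p_0}(q))\ge f(q)$ for all $q\in\overline\Omega_0$, which in turn forces each level set $\{f\ge\ell\}$ to be stable under $\delta_\lambda^{p_0}$. The whole theorem therefore reduces to this comparison.

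To establish it, I would fix $\mu\ge 1$, introduce the ``inward-dilated'' function $f_\mu(p):=f(\delta_\mu^{p_0}(p))$ on $\widetilde\Omega_0:=\delta_{1/\mu}^{p_0}(\overline\Omega_0)$ (with inner obstacle $\widetilde\Omega_1:=\delta_{1/\mu}^{p_0}(\Omega_1)$), and apply the comparison principle \eqref{CP} on the auxiliary ring $A_\mu:=\widetilde\Omega_0\setminus\overline\Omega_1\subseteq\Omega$; the inclusion holds because the starshapedness of $\Omega_1$ with respect to $p_0$ yields $\widetilde\Omega_1\subseteq\Omega_1$. The key computation uses the left-invariance of $X_1,\dots,X_m$ together with the scaling of $\cg_1$ under $\delta_\mu$ to obtain, for smooth test functions,
\[
\nh(f\circ\delta_\mu^{p_0})(p)=\mu\,(\nh f)(\delta_\mu^{p_0}(p)),\qquad \nh^2(f\circ\delta_\mu^{p_0})(p)=\mu^2\,(\nh^2 f)(\delta_\mu^{p_0}(p)).
\]
Applied in the viscosity framework, given $\phi\in C^2$ touching $f_\mu$ from above at $p\in A_\mu$, the composition $\psi:=\phi\circ\delta_{1/\mu}^{p_0}$ touches $f$ from above at $q_0:=\delta_\mu^{p_0}(p)\in\Omega$, so that the subsolution inequality for $f$ reads
\[
\mathscr F\!\left(q_0,\,f_\mu(p),\,\tfrac{1}{\mu}\nh\phi(p),\,\tfrac{1}{\mu^2}\nh^2\phi(p)\right)\ge 0.
\]
Then hypothesis \eqref{structuralAssumptionF} with $\lambda=\mu$ (whose ``$p\in\Omega$'' requirement becomes $q_0\in\Omega$ and ``$(\delta_\mu^{p_0})^{-1}(p)\in\Omega$'' becomes $p\in\Omega$, both verified) converts this into $\mathscr F(p,f_\mu(p),\nh\phi(p),\nh^2\phi(p))\ge 0$, showing that $f_\mu$ is a viscosity subsolution on $A_\mu$. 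On $\partial A_\mu$ the boundary comparison is direct: $f_\mu\equiv 0\le f$ on the portion lying in $\partial\widetilde\Omega_0$ (since $\delta_\mu^{p_0}$ sends it to $\partial\Omega_0$) and $f_\mu\le 1\equiv f$ on the portion lying in $\partial\Omega_1$. Hence \eqref{CP} yields $f_\mu\le f$ on $\overline{A_\mu}$; on the complementary piece $\widetilde\Omega_0\cap\overline\Omega_1$, where $f\equiv 1$, the inequality is automatic. Combining, $f_\mu\le f$ throughout $\widetilde\Omega_0$, which is exactly the reduction stated at the outset.

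The hardest part will be the scaling/subsolution step: transferring the PDE from $f$ to $f_\mu$ via hypothesis \eqref{structuralAssumptionF}. Two items require care. First, because $f$ is only continuous, the chain-rule identities for $\nh$ and $\nh^2$ must be routed through the rescaled test function $\psi=\phi\circ\delta_{1/\mu}^{p_0}$ rather than applied pointwise to $f$. Second, the invocation of \eqref{structuralAssumptionF} needs the verification that $q_0\in\Omega$ and $p\in\Omega$ simultaneously, which ultimately amounts to the small geometric check that $\widetilde\Omega_1\subseteq\Omega_1$ and that dilating a point of $A_\mu$ outward cannot land it in $\overline\Omega_1$. A minor additional point is that \eqref{CP} is formally stated on $\Omega$, not on the subdomain $A_\mu$; since comparison is a property of the PDE independent of the reference domain, this causes no difficulty but deserves acknowledgment in the write-up.
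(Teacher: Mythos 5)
Your argument is correct in substance, and its engine is the same as the paper's: a rescaled test function $\psi=\phi\circ\delta^{p_0}_{1/\mu}$, the homogeneity of the horizontal derivatives (Lemma \ref{L:xjhom} and \eqref{laphom}) applied to the test function rather than to $f$, and hypothesis \eqref{structuralAssumptionF} to transfer the subsolution inequality back to the dilated point. Where you genuinely diverge is in the global organization. The paper forms the single \emph{starshaped envelope} $f^\star=\sup_{\mu\ge 1} f_\mu$ on all of $\overline\Om_0$, checks that it is a continuous subsolution on the whole ring $\Om$ with the right boundary values, and invokes \eqref{CP} exactly once, on $\Om$ itself, precisely as that hypothesis is stated. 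You instead run one comparison per dilation parameter $\mu$, each on the shrunken ring $A_\mu=\delta^{p_0}_{1/\mu}(\Om_0)\setminus\overline\Om_1$. This buys you something real: you never have to verify that a supremum of subsolutions is a subsolution, nor worry about continuity of the envelope or attainment/selection of the maximizing $\overline\la(p)$ (issues the paper must address, including the choice \eqref{defla}). The price is the point you yourself flag at the end: \eqref{CP} is a hypothesis about the fixed ring $\Om$, and your proof needs comparison on the proper subdomains $A_\mu$. This is harmless for every model operator in Section \ref{S:models}, where comparison holds on arbitrary bounded open sets, but as a proof of the theorem \emph{under the literal hypotheses} it is a gap you should close, either by strengthening \eqref{CP} to subdomains or by assembling your $f_\mu$'s into the envelope as the paper does --- the envelope construction exists exactly to confine the use of \eqref{CP} to $\Om$. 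Two smaller bookkeeping items: take $\widetilde\Om_0=\delta^{p_0}_{1/\mu}(\Om_0)$ open so that $A_\mu$ is an open subset of $\Om$ and $q_0=\delta^{p_0}_\mu(p)\in\Om_0$ for interior $p$; and your check that $q_0\notin\overline\Om_1$ should explicitly use that the closure of a starshaped set is starshaped (if $\delta^{p_0}_\mu(p)\in\overline\Om_1$ then $p\in\delta^{p_0}_{1/\mu}(\overline\Om_1)\subseteq\overline\Om_1$, contradicting $p\in A_\mu$). With these repairs the proof is complete.
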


As we have mentioned in the opening of the section Theorem \ref{teoremaPuntoGenerico} encompasses various pde's of interest in the analysis and geometry of Carnot groups. For a discussion of these models we refer the reader to Section \ref{S:models}. Here we note that, although Theorem \ref{teoremaPuntoGenerico} generalizes to Carnot groups a result by the third named author in $\R^N$, see Theorem 5.1 in \cite{PS}, the fact that starshapedness is preserved in the intricate geometry of Carnot groups is a perhaps unexpected and interesting phenomenon. One reason is that, while the fully nonlinear pde in \eqref{HorCapacity_intro} involves exclusively differentiation along the horizontal directions, the infinitesimal generator of the non-isotropic group dilations is not a horizontal vector field. In fact, it involves differentiation along all layers of the Lie algebra, see Proposition \ref{P:Z} below.

\medskip

We mention that, still in $\rn$, previous results in the direction of Theorem \ref{teoremaPuntoGenerico} are contained in the following papers \cite{Ge}, \cite{W}, \cite{BK}, \cite{L}, \cite{DK}, \cite{A}, \cite{Fr},  \cite{FG}, \cite{GS}, \cite{GR}, and in the more recent \cite{JKS}, where nonlocal operators are considered. In the framework of the present paper the only previous result connected to Theorem \ref{teoremaPuntoGenerico} is due to Danielli and the second named author. They considered in \cite{D-G} the exterior capacitary potential $f$ of a $C^2$, starshaped bounded set $\Om$ in a Carnot group of Heisenberg type $\G$
\begin{equation}\label{dg}
\begin{cases}
\Delta_\cH f = 0\ \ \ \ \ \ \ \ \ \ \text{in}\ \G\setminus \overline \Om,
\\
f\big|_{\p \Om} = 1,\ \ \ \ \ \ \underset{d(p,e)\to \infty}{\lim} f(p) = 0.
\end{cases}
\end{equation}
For such $f$ it was proved that for every $0<\ell<1$ the superlevel set of $f$,
\[
\Om_\ell = \{p\in \G\setminus \overline \Om\mid f(p) \ge \ell\}, 
\]
is strictly starshaped. This implies, in particular, that $\p \Om_\ell$ is a $C^\infty$ compact manifold of codimension one. A generalization of this result to arbitrary Carnot groups is contained in \cite{DG2}.

\medskip

This paper is organized as follows. 
In Section \ref{S:cg} we introduce some basic facts about Carnot groups which are needed for this paper. In Section \ref{S:models} we analyze some specific models of geometric pde's to which Theorem \ref{teoremaPuntoGenerico} applies. In Section \ref{S:star} we discuss the notion of starshapedness of a set and investigate some associated geometrical properties.
Finally, in Section \ref{S:proof} we prove Theorem \ref{teoremaPuntoGenerico}.


\section{Carnot groups}\label{S:cg}

In this section we introduce the definition of a Carnot group and recall some of its most basic aspects. For most of the properties listed in the sequel we refer the reader to Folland's 1975 seminal article \cite{Fo}, as well as to the books \cite{FS}, \cite{CGr}, and the more recent monographs \cite{Mon}, \cite{BLU} and \cite{Gems}.

\begin{defi}\label{CarnotgroupDefi}
Given $k\in \mathbb N$, a \emph{Carnot group} of step $k$ is a simply-connected real Lie group $(\G, \circ)$ whose Lie algebra $\cg$ is stratified and $k$-nilpotent. This means that there exist vector spaces $\mathrm{g}_1,...,\mathrm{g}_k$ such that  
\begin{itemize}
\item[(i)] $\mathrm{g}=\mathrm{g}_1\oplus \dots\oplus\mathrm{g}_k$;
\item[(ii)] $[\cg_1,\cg_j] = \cg_{j+1}$, $j=1,...,k-1,\ \ \ [\cg_1,\cg_k] = \{0\}$.
\end{itemize}
\end{defi}

Since according to (ii) the first layer $\cg_1$ of the Lie algebra plays a special role, it is called the \emph{horizontal}, or bracket-generating, layer of $\cg$. If $m_i = \operatorname{dim} \cg_i$, then we indicate with $N = m_1+\dots+m_k$ the topological dimension of the group $\G$. For notational simplicity we will always denote with $m=m_1$ the dimension of the horizontal layer $\cg_1$.

\medskip

 When $k=1$ the Lie group is Abelian, and we are back to the Euclidean setting. Since we are not interested in this case, in this paper we assume that $k\geq 2$. By the assumption that $\G$ be simply-connected we know that the exponential mapping $\exp: \cg \to \G$ is a global real-analytic diffeomorphism onto, see \cite{V} and \cite{CGr}. We will use this global chart to identify the point $p = \exp u\in \G$ with its logarithmic preimage $u\in \cg$. 

\medskip

Once the bracket relations at the level of the Lie algebra are assigned, the group law is too. This follows from the Baker-Campbell-Hausdorff formula, 
see, e.g., sec. 2.15 in \cite{V},
\begin{equation}\label{BCH}
\exp(u) \circ \exp(v) = \exp{\bigg(u + v + \frac{1}{2}
[u,v] + \frac{1}{12} \big\{[u,[u,v]] -
[u,[u,v]]\big\} + ...\bigg)},
\end{equation}
where the dots indicate commutators of order four and higher.  Furthermore, since by (ii) in \eqref{CarnotgroupDefi} above all commutators of order $k$ and higher are trivial, in every Carnot group the Baker-Campbell-Hausdorff series in the right-hand side of \eqref{BCH} is finite. 

\medskip

Every Carnot group is naturally equipped with a one-parameter family of automorphisms $\{\delta_\lambda\}_{\lambda>0}$  which are called the group dilations. One first defines a family of non-isotropic dilations $\Delta_\lambda :\cg \to \cg$ in the Lie algebra by assigning the formal degree $j$ to the $j$-th layer $\cg_j$ in the stratification of $\cg$. This means that if $u = u_1 + ... + u_k \in \cg$, with $u_j\in \cg_j$, $j = 1,...,k,$ one lets
\begin{equation}\label{dilg}
\Delta_\lambda u = \Delta_\lambda (u_1 + \cdots  + u_k) = \lambda u_1 + \cdots +  \lambda^k u_k.
\end{equation}
One then uses the exponential mapping to push forward \eqref{dilg} to the group $\G$, i.e., we define a one-parameter family of group automorphisms $\delta_\la :\G \to \G$ by the equation
\begin{equation}\label{dilG}
\delta_\lambda(p) = \exp \circ \Delta_\lambda \circ \exp^{-1}(p),\quad\quad\quad p\in \G.
\end{equation}
From the definition \eqref{dilG}, and the properties of $\Delta_\la$ on $\cg$, one easily verifies the following result.

\begin{lemma}\label{DilationsProperties}
For all  $\la, \mu >0$ one has:
\begin{enumerate}
\item[(1)]  $\delta_{1} = \operatorname{id}$;
\item[(2)] $\delta^{-1}_{\la} = \delta_{\la^{-1}}$;
  \item[(3)]
  $\delta_{\la}\circ \delta_{\mu} =\delta_{\la \mu}$;
\item[(4)] for every $p, p'\in \G$ one has $\delta_\la(p) \circ \delta_\la(p') = \delta_\la(p\circ p')$.
\end{enumerate}
\end{lemma}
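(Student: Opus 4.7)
The plan is to derive all four properties from corresponding properties of the linear dilations $\Delta_\lambda$ on the Lie algebra $\cg$, transferred to $\G$ via the exponential diffeomorphism $\exp:\cg\to\G$. Properties (1)--(3) are essentially immediate: since $\Delta_\lambda$ acts as multiplication by $\lambda^j$ on each stratum $\cg_j$, one reads off layer by layer that $\Delta_1 = \mathrm{id}_\cg$, $\Delta_\lambda^{-1} = \Delta_{1/\lambda}$ and $\Delta_\lambda\circ \Delta_\mu = \Delta_{\lambda\mu}$. Conjugating by $\exp$ yields the analogous identities $\delta_1 = \mathrm{id}_\G$, $\delta_\lambda^{-1} = \delta_{1/\lambda}$ and $\delta_\lambda\circ \delta_\mu = \delta_{\lambda\mu}$, which are precisely (1), (2) and (3).

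The substantive content is (4), the assertion that $\delta_\lambda$ is a group homomorphism of $\G$. I would first show that $\Delta_\lambda$ is a Lie-algebra homomorphism of $\cg$: for $u\in\cg_i$ and $v\in\cg_j$, the stratification property $[\cg_i,\cg_j]\subset \cg_{i+j}$ gives
\[
\Delta_\lambda[u,v] = \lambda^{i+j}[u,v] = [\lambda^i u,\lambda^j v] = [\Delta_\lambda u,\Delta_\lambda v],
\]
and bilinearity extends this identity to all of $\cg$. I would then lift to $\G$ by means of the BCH formula \eqref{BCH}: in the nilpotent setting it truncates after finitely many terms and expresses $\exp(u)\circ\exp(v) = \exp(\mathrm{BCH}(u,v))$ as a polynomial in $u,v$ built from linear combinations and iterated brackets. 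Applying $\Delta_\lambda$ and using that it commutes with brackets gives $\Delta_\lambda(\mathrm{BCH}(u,v)) = \mathrm{BCH}(\Delta_\lambda u,\Delta_\lambda v)$, so
\[
\delta_\lambda(\exp(u)\circ \exp(v)) = \exp(\Delta_\lambda u)\circ \exp(\Delta_\lambda v) = \delta_\lambda(\exp(u))\circ \delta_\lambda(\exp(v)),
\]
which is (4).

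The only non-trivial step is the verification that $\Delta_\lambda$ is a Lie-algebra homomorphism; this is where the grading of brackets in Definition \ref{CarnotgroupDefi}(ii) is used essentially. Once this is in hand, the rest of (4) is a routine application of the (truncated) BCH formula, and properties (1)--(3) are formal.
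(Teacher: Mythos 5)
Your proof is correct and follows exactly the route the paper intends: the paper omits the argument, stating only that the lemma is ``easily verified'' from the definition \eqref{dilG} and the properties of $\Delta_\la$ on $\cg$, and your write-up is precisely the standard fleshing-out of that remark, with (1)--(3) read off layer by layer and (4) obtained from the fact that $\Delta_\lambda$ is a graded Lie-algebra automorphism combined with the truncated Baker--Campbell--Hausdorff formula \eqref{BCH}. The only point worth flagging is that Definition \ref{CarnotgroupDefi} literally gives only $[\cg_1,\cg_j]=\cg_{j+1}$, so the inclusion $[\cg_i,\cg_j]\subset\cg_{i+j}$ you invoke needs the (standard, one-line) Jacobi-identity argument using that $\cg_1$ generates $\cg$.
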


\medskip

The \emph{homogeneous dimension} of $\G$ with respect to the dilations \eqref{dilG} is defined as follows
\begin{equation}\label{QG}
Q = \sum_{i = 1}^k i \operatorname{dim} \cg_i.
\end{equation}
Since we are assuming that $k>1$ this number is strictly bigger than the topological dimension $N$ of $\G$ and it plays a pervasive role in the analysis and geometry of $\G$. Given the one-parameter family of dilations \eqref{dilG} in $\G$, we introduce the following definition.

\begin{defi}\label{D:hom}
Let $\kappa\in \R$. A function $f:\G \to \R$ is called \emph{homogeneous} of degree $\kappa$ if for every $p\in \G$ and every $\la >0$ one has
\[
f(\delta_\la p) = \la^\kappa f(p).
\]
A differential operator $Y$ on $\G$ is called homogeneous of degree $\kappa$ if for every $f\in C^\infty(\G)$ one has
\[
Y(\delta_\la f) = \la^\kappa \delta_\la(Yf),
\]
where we have let $\delta_\la f(p) \overset{def}{=} f(\delta_\la p)$.
\end{defi} 

Concerning the vector fields $\{X_1,...,X_m\}$ in $\G$ associated to an orthonormal basis $\{e_1,...,e_m\}$ as in \eqref{xj} above, we have the following simple fact, see \cite{Fo}.

\begin{lemma}\label{L:xjhom}
For every $j = 1,...,m$ the vector field $X_j$ in \eqref{xj} is homogeneous of degree $\kappa = 1$, i.e., for any $f\in C^\infty(\G)$ one has
\[
X_j(\delta_\la f) = \la \delta_\la(X_j f).
\]
\end{lemma}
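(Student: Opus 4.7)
The plan is to unravel the left-invariant vector field $X_j$ through its one-parameter subgroup representation, and then exploit how the dilations act at the Lie algebra level together with the homomorphism property (4) of Lemma \ref{DilationsProperties}. The essential input is that $e_j \in \cg_1$, so $\Delta_\la(e_j) = \la e_j$; this is the only place where the ``degree one'' conclusion enters.

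Concretely, I would begin from the defining formula $X_j(p) = dL_p(e_j)$ and note that, for any smooth $f$,
\[
X_j f(p) \;=\; \frac{d}{dt}\bigg|_{t=0} f\big(p\circ \exp(te_j)\big).
\]
Applying this with $f$ replaced by $\delta_\la f$ gives
\[
X_j(\delta_\la f)(p) \;=\; \frac{d}{dt}\bigg|_{t=0} f\big(\delta_\la(p\circ \exp(te_j))\big).
\]
Now I would invoke property (4) of Lemma \ref{DilationsProperties} to split the dilation across the group product: $\delta_\la(p\circ\exp(te_j)) = \delta_\la(p)\circ \delta_\la(\exp(te_j))$. Using the intertwining formula \eqref{dilG} together with the linearity of $\Delta_\la$ and the fact that $e_j \in \cg_1$, the second factor becomes $\exp(\Delta_\la(te_j)) = \exp(t\la e_j)$.

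Substituting back and setting $s=t\la$, a single change of variables produces the factor of $\la$:
\[
X_j(\delta_\la f)(p) \;=\; \la \frac{d}{ds}\bigg|_{s=0} f\big(\delta_\la(p)\circ \exp(se_j)\big) \;=\; \la\, (X_jf)(\delta_\la p) \;=\; \la\, \delta_\la(X_jf)(p).
\]
I do not expect any real obstacle: the proof reduces to a clean application of the defining properties of $X_j$, the distributivity of $\delta_\la$ over $\circ$, and the scalar action of $\Delta_\la$ on the first stratum. The only point requiring attention is keeping track of the distinction between $\delta_\la$ acting on points of $\G$ and $\Delta_\la$ acting on $\cg$, but \eqref{dilG} makes the translation between the two completely mechanical. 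It is also worth remarking, for later use in the paper, that the same argument applied to a commutator $[X_{i_1},[X_{i_2},\ldots,X_{i_j}]]$ of horizontal fields yields homogeneity of degree $j$, which is exactly why the $j$-th layer carries formal degree $j$.
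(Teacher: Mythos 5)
Your argument is correct and complete: the chain $X_j(\delta_\la f)(p)=\frac{d}{dt}\big|_{t=0}f\bigl(\delta_\la(p)\circ\exp(t\la e_j)\bigr)=\la\,(X_jf)(\delta_\la p)$ uses exactly the right ingredients, namely property (4) of Lemma \ref{DilationsProperties}, the intertwining \eqref{dilG}, and the fact that $\Delta_\la$ acts by the scalar $\la$ on $\cg_1$. The paper itself gives no proof and simply cites \cite{Fo}; your one-parameter-subgroup computation is the standard argument behind that reference, so there is nothing to compare and nothing to fix.
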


As a consequence of Lemma \ref{L:xjhom}
the horizontal Hessian $\nh^2 f$ in \eqref{hessian} above is homogeneous of degree two with respect to the dilations $\delta_\lambda$, i.e., for every $f\in C^\infty(\G)$ we have
\begin{equation}\label{laphom}
\nh^2 (\delta_\lambda f) = \lambda^2 \delta_\lambda(\nh^2 f).
\end{equation}
 
\medskip


\section{Some relevant geometric pde's}\label{S:models}

Among many possible examples, in this section we introduce some relevant model partial differential operators for the problem \eqref{HorCapacity_intro} and discuss them in connection with the hypothesis \eqref{proper},
\eqref{DegenerateElliptic}, \eqref{structuralAssumptionF} and \eqref{CP} in Theorem \ref{teoremaPuntoGenerico}. This is an important aspect since it puts our main result on a solid ground by showing that it is not empty, and that in fact it does apply to a variety of situations of interest in analysis and geometry.

\subsection{The horizontal Laplacean}\label{SS:hl}
The most basic example to keep in mind for the problem \eqref{HorCapacity_intro} is when
\begin{equation}\label{hlap}
\mathscr F\big(p,f,\nh f, \nh^2 f\big) = \operatorname{trace}(\nh^2 f) = \Delta_\cH f = \sum_{j=1}^m X_j^2 f.
\end{equation}
The linear partial differential operator in \eqref{hlap} is known as the \emph{horizontal Laplacean} (or sub-Laplacean) associated with the orthonormal basis $\{e_1,...,e_m\}$ of $\mathrm{g}_1$, and it plays a central role in analysis and sub-Riemannian geometry. We note here that, since we are assuming that the step of $\G$ is $k>1$, the operator $\Delta_\cH$ fails to be elliptic at every point $p\in \G$. However, thanks to H\"ormander's celebrated result in \cite{Ho}, in a Carnot group every horizontal Laplacean is hypoelliptic. This means that distributional solutions of $\Delta_\cH f = F$ are $C^\infty$ wherever such is $F$. By an adaptation of the Perron-Wiener-Brelot method for any condenser $(\Om_0,\overline \Om_1)$ there exists a viscosity solution $f\in C^\infty(\Om)$ to the problem \eqref{HorCapacity_intro}. Such $f$ represents the capacitary potential of the condenser. This function is important in the applications since, as it was shown in \cite{CDG}, it controls from above and below the asymptotic behavior of the Green's function of $\Delta_\cH$ near its singularity.

\medskip

We emphasize that, contrary to what happens in the standard Euclidean setting, even if the two sets $\Om_0$ and $\Om_1$ have $C^\infty$ boundary, it is not necessarily true in general that the assumption $f\in C(\overline{\Omega})$ in Theorem \ref{teoremaPuntoGenerico} be fulfilled. For instance, it was shown in Theorem 3.6 in \cite{HH} that when the group $\G$ has step $k\ge 3$ then the gauge balls (which have $C^\infty$ boundary) in \eqref{gb} are not necessarily regular for the Dirichlet problem for the horizontal Laplacean $\Delta_\cH$. 

\medskip

However, the hypothesis $f\in C(\overline{\Omega})$ does hold for large classes of condensers. For instance, a sufficient (purely metric) condition is that the domains $\Om_i$, $i = 0, 1$, be non-tangentially accessible (NTA) in the sense of \cite{CG}. This follows from the fact that NTA domains admit both an exterior and an interior non-tangential metric ball at every boundary point. From this property and the Wiener type criterion in \cite{NS} it follows that the Perron solution (and therefore the viscosity solution) to the problem \eqref{HorCapacity_intro} is in $C(\overline \Om)$. 

\medskip

Having said this, it is clear that the model \eqref{hlap}
 verifies the hypothesis \eqref{proper},
\eqref{DegenerateElliptic}, \eqref{structuralAssumptionF} (this latter hypothesis follows from Lemma \ref{L:xjhom} and \eqref{laphom}). Finally, the assumption \eqref{CP} is a direct consequence of Bony's strong maximum principle in \cite{Bony}.

\subsection{The horizontal $q$-Laplacean}\label{SS:hql}
More in general, given $1<q<\infty$ we consider the quasilinear operator, known as the \emph{horizontal $q$-Laplacean}, defined by
\begin{equation}\label{hplap}
\mathscr F\big(p,f,\nh f, \nh^2 f\big) = \Delta_{\cH,q} f \overset{def}{=} \sum_{j=1}^m X_j(|\nh f|^{q-2} X_j f).
\end{equation}
Operators such as \eqref{hplap} first (implicitly) appeared in 
Mostow's celebrated work on rigidity \cite{Mo}, and were subsequently studied by several other authors, see \cite{KR1}, \cite{KR2},  \cite{KR3},
\cite{P}, \cite{P2}, \cite{HR}, \cite{HH}, \cite{MM}, \cite{CDGcpde} and \cite{CDG}. In analysis \eqref{hplap} is connected to the Euler-Lagrange equation of the Folland-Stein horizontal Sobolev embedding $\mathscr S^{1,q}(\G) \subset L^{q^\star}(\G)$, when $1<q<Q$, and $1/q-1/q^\star = 1/Q$, with $Q$ as in \eqref{QG}.  
Notice that, if we define for $\sigma\in \R^m\setminus\{0\}$
 \begin{equation}\label{aij}
 a_{ij}(\sigma) = |\sigma|^{q-2}\left\{\delta_{ij} + (q-2) \frac{\sigma_i \sigma_j}{|\sigma|^2}\right\},
 \end{equation}
 then with $A(\sigma) = [a_{ij}(\sigma)]$ we can write the right-hand side of \eqref{hplap} in nondivergence form as follows
 \begin{equation}\label{hplap2}
 \mathscr F\big(p,f,\nh f, \nh^2 f\big) = \mathscr F\big(\nh f, \nh^2 f\big) = \operatorname{trace}(A(\nh f) \nh^2 f). 
 \end{equation}

\medskip

Given an arbitrary condenser $(\Om_0,\overline \Om_1)$ the existence of a weak solution to the problem \eqref{HorCapacity_intro} follows from the results in \cite{D} and \cite{TW}. Now, as a special case of the results in \cite{CDGcpde} one knows that weak solutions to $\Delta_{\cH,q} f = 0$ are locally in a H\"older class $C^{0,\alpha}$. On the other hand, in his papers \cite{tom} (Heisenberg group) and \cite{Bieske_P} (arbitrary Carnot groups) Bieske introduced the notion of viscosity solutions to $\Delta_{\cH,q} f = 0$, and he proved that they coincide with the weak solutions. Combining these results we conclude that for the operator \eqref{hplap} viscosity solutions of \eqref{HorCapacity_intro} are in fact at least $C_{loc}^{0,\alpha}$ (we mention that, in the Heisenberg group $\Hn$, Mukherjee and Zhong have recently proved in \cite{MZ} the remarkable (optimal) result that weak solutions of $\Delta_{\cH,q} f = 0$ are locally in the class $C^{1,\alpha}$, for some $\alpha = \alpha(n,q)\in (0,1)$.  
 An analogous optimal smoothness result for general Carnot groups presently remains an open question).
 
\medskip

Concerning the assumption $f\in C(\overline \Om)$ in Theorem \ref{teoremaPuntoGenerico}, we distinguish two cases: 1) $1<q\le Q$; and 2) $Q<q<\infty$. In case 1), similarly to Section \ref{SS:hl}, the validity of $f\in C(\overline \Om)$ is not guaranteed even when $\Om_i\in C^\infty$, $i=0,1$. Analogously to the linear case $q=2$ discussed in Section \ref{SS:hl}, a sufficient geometric assumption is that $\Om_i$ be NTA domains. Then, combining the existence of an exterior and interior non-tangential ball at every $p\in \p \Om$ with Theorems 3.1 and 3.9  in \cite{D} one concludes that every boundary point is regular for the Dirichlet problem, and thus the viscosity capacitary potential $f$ is in $C(\overline \Om)$. We mention here that when the group $\G$ has step $k = 2$, then every $C^{1,1}$ domain is NTA (and there exist $C^{1,\alpha}$ domains which are not regular for the Dirichlet problem), see \cite{CG} and \cite{MM}. In case 2), then in view of Theorem 8.1 in \cite{CDG} the $q$-capacity of a point is positive (bounded from below away from zero by a universal constant). By this observation and the Wiener type criterion in Theorem 6.2 in \cite{TW} we infer that, when $q>Q$, \emph{every bounded open set is regular for the (weak) Dirichlet problem}. Since by the results in the cited paper \cite{Bieske_P} weak solutions coincide with viscosity solutions, we conclude that for the operator \eqref{hplap} the assumption $f\in C(\overline{\Omega})$ in Theorem \ref{teoremaPuntoGenerico} is fulfilled for \emph{any} condenser.

Finally, the operator \eqref{hplap}
does verify the hypothesis \eqref{proper},
\eqref{DegenerateElliptic}, \eqref{structuralAssumptionF} (again, this follows from Lemma \ref{L:xjhom} and \eqref{laphom}), whereas the comparison principle for viscosity solutions in assumption \eqref{CP} is 
proved in \cite{{Bieske_P}} (Theorem 5.1).

\subsection{The horizontal $\infty$-Laplacean}\label{SS:infty} 
 
Yet another important model for Theorem \ref{teoremaPuntoGenerico} is the \emph{horizontal $\infty$-Laplacean} defined by
 \begin{equation}\label{infty}
\mathscr F\big(p,f,\nh f, \nh^2 f\big) = \Delta_{\cH,\infty} f \overset{def}{=} <\nh^2 f(\nh f),\nh f>.
\end{equation}
Similarly to its Euclidean predecessor, the fully nonlinear operator $\Delta_{\cH,\infty}$ is formally obtained as the limiting case as $q\to \infty$ of the operator $\Delta_{\cH,q}$ in \eqref{hplap}. 
The existence and uniqueness of viscosity solutions for the horizontal $\infty$-Laplacean in Carnot groups was proved in \cite{BC}, and subsequently in \cite{Wang} for general H\"ormander type vector fields, see also \cite{WY}.  
A more probabilistic approach to existence and uniqueness of viscosity solutions for \eqref{infty} is obtained from the theory of tug-of-war games. In Theorems 1.3 and 1.4 in \cite{Tug} the authors proved that in any metric space which is also a length-space (this assumption cannot be removed, see \cite{ACJ}), whenever the boundary datum is Lipschitz and bounded from below (or from above), the continuum value of the game exists and describes the unique absolutely minimizing Lipschitz extension (AMLE) of the datum. By Proposition 4.20 in \cite{Gems} every Carnot-Carath\'eodory space, and therefore in particular any Carnot group, is a length-space. It follows that the results in \cite{Tug} apply, and by  combining them with Theorem 4.10 in \cite{DMV} the continuum value of the tug-of-war  in \cite{Tug}  gives directly the unique Lipschitz viscosity solution of \eqref{infty}  in all sub-Riemannian manifolds, and therefore in particular in Carnot groups.
Thus, similarly to what happens to \eqref{hplap} in the case $q>Q$, viscosity solutions of the problem \eqref{HorCapacity_intro} for the fully nonlinear operator $\Delta_{\cH,\infty}$ do generically satisfy the hypothesis $f\in C(\overline{\Omega})$ in Theorem \ref{teoremaPuntoGenerico} for \emph{any} condenser. 

\medskip
 
Finally, the fully nonlinear operator \eqref{infty} 
fulfills the hypothesis \eqref{proper},
\eqref{DegenerateElliptic}, \eqref{structuralAssumptionF} (Lemma \ref{L:xjhom} and \eqref{laphom}), whereas \eqref{CP} follows from the comparison principle established in \cite{Bieske_infty}, \cite{BC} and also in \cite{Wang}. For a further extension the reader can see \cite{WY}.

\section{Starshaped  sets in Carnot groups}\label{S:star} 

In Definition \ref{D:star} we have introduced the notion of starhapedness in a Carnot group. In \cite{D-G} such notion was first introduced in the following alternative way. Let $\{\delta_\la\}_{\la>0}$ be the non-isotropic group dilations \eqref{dilG}. We denote by $\mathscr Z$ the infinitesimal generator of this group of automorphisms. This vector field acts on a function $f:\G\to \R$ according to the Lie formula
\begin{equation}\label{Z}
\mathscr Z f(p) = \lim_{\lambda\to 1} \frac{f\big(\delta_\lambda (p)\big) - f(p)}{\lambda - 1} ,\quad\quad\quad p\in \G. 
\end{equation}

Using \eqref{dilG} and \eqref{Z}, it is easy to recognise that, if we identify $p = \exp(u)\in \G$ with its logarithmic coordinates $(u_1,...,u_m,u_{2,1},...,u_{2,m_2},...,u_{k,1},...,u_{k,m_k})$, then
the vector field $\Z$ takes the form
\begin{equation}\label{Z1}
\Z = \sum_{j=1}^m u_j \frac{\partial}{\partial u_j} + 2 \sum_{s=1}^{m_2} u_{2,s} \frac{\partial}{\partial u_{2,s}} + ... + k \sum_{\ell=1}^{m_k} u_{k,\ell} \frac{\partial}{\partial u_{k,\ell}}.
\end{equation}
We emphasize here that from the representation \eqref{Z1} and the Baker-Campbell-Hausdorff formula one can prove the following result from \cite{DG2}. For every $j=1,...,k$ denote by $\{e_{j,1},...,e_{j,m_j}\}$ an orthonormal basis of the layer $\cg_j$ of the Lie algebra, keeping in mind that when $j=1$ we write $\{e_1,...,e_m\}$. As in \eqref{xj}, define left-invariant vector fields in $\G$ by setting 
\[
X_{j,\ell}(p) = dL_p(e_{j,\ell}),\ \ \ \ \ \ \ \ \ \ j=1,...,k,\ \ell =1,...,m_j.
\]
Again, we denote by $\{X_1,...,X_m\}$ the family $\{X_{1,1}, ... ,X_{1,m_1}\}$.

\begin{prop}\label{P:Z}
There exist polynomials in $\G$, $Q_{j,\ell}$, $j=1,...,k$, $\ell =1,...,m_j$, where for each $j=1,...,k$ the function $Q_{j,\ell}$ is homogeneous of degree $j$, i.e., $\delta_\lambda Q_{j,\ell} = \lambda^j Q_{j,\ell}$,
such that
\[
\Z = \sum_{\ell=1}^m Q_{1,\ell} X_\ell + \sum_{\ell=1}^{m_2} Q_{2,\ell} X_{2,\ell} + ... + \sum_{\ell=1}^{m_k} Q_{k,\ell} X_{k,\ell}.
\]
\end{prop}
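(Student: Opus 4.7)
The plan is to prove Proposition \ref{P:Z} by working in exponential coordinates, inverting a unipotent triangular system of vector fields given by the Baker--Campbell--Hausdorff formula, and then extracting the homogeneity of the resulting coefficients from the scaling behavior of $\mathscr Z$ and of the $X_{j,\ell}$'s.

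First, I would fix exponential coordinates $u = (u_{1,1},\dots,u_{k,m_k})$ on $\G$ and expand $X_{j,\ell}(p)f = \frac{d}{dt}\big|_{t=0} f(p\circ \exp(t\,e_{j,\ell}))$ using the BCH formula \eqref{BCH}. Because the bracket relations satisfy $[\cg_i,\cg_{j}]\subset \cg_{i+j}$, every iterated bracket involving $e_{j,\ell}$ lies in a layer of weight $\ge j$, and the terms of weight exactly $j$ collapse to $e_{j,\ell}$ itself. This gives the representation
\[
X_{j,\ell} = \frac{\partial}{\partial u_{j,\ell}} + \sum_{i>j}\sum_{s} c^{(j,\ell)}_{i,s}(u)\,\frac{\partial}{\partial u_{i,s}},
\]
where each $c^{(j,\ell)}_{i,s}$ is a polynomial, homogeneous of weight $i-j$ with respect to $\delta_\la$ (this is the standard Folland description of the left-invariant frame in a stratified group). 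If we order multi-indices by layer, the matrix $M(u)$ whose columns are the coordinate expressions of the $X_{j,\ell}$'s relative to the $\partial/\partial u_{i,s}$'s is therefore block triangular with ones on the diagonal and polynomial entries above (or below, depending on the ordering).

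Since $\{X_{j,\ell}(p)\}$ is a basis of $T_p\G$ at every $p$, there exist unique smooth functions $Q_{j,\ell}$ with $\mathscr Z = \sum_{j,\ell} Q_{j,\ell}\,X_{j,\ell}$. Formally, $Q = M^{-1} Z$, where $Z$ is the coefficient vector of $\mathscr Z$ in the coordinate frame, which by \eqref{Z1} has polynomial entries $Z_{(i,s)} = i\,u_{i,s}$. Because $I-M$ is nilpotent with polynomial entries, $M^{-1}$ is a polynomial matrix, and hence each $Q_{j,\ell}$ is automatically a polynomial. Concretely, one can solve the system inductively layer by layer: for the lowest layer $Q_{1,\ell}=u_{1,\ell}$, and for a general layer $j$ one obtains $Q_{j,\ell}$ as $j\,u_{j,\ell}$ minus polynomial expressions built from the previously determined $Q_{j',\ell'}$ with $j'<j$ and the $c^{(j',\ell')}_{j,\ell}$'s, which by the triangular structure yield polynomials of the right shape.

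To identify the homogeneity, I would use that $\mathscr Z$, as the infinitesimal generator of $\{\delta_\la\}$, is $\delta_\la$-invariant, i.e.\ $(\delta_\la)_*\mathscr Z = \mathscr Z$, while the left-invariant vector fields satisfy $(\delta_\la)_*X_{j,\ell} = \la^j\,X_{j,\ell}$ (an immediate consequence of the generalization of Lemma \ref{L:xjhom}, since $\delta_\la$ acts on $\cg_j$ by multiplication by $\la^j$). Pushing forward both sides of $\mathscr Z = \sum Q_{j,\ell}X_{j,\ell}$ by $\delta_\la$ and using the rule $(\delta_\la)_*(Q X)=\big(Q\circ\delta_{1/\la}\big)\,(\delta_\la)_*X$ yields
\[
\mathscr Z \;=\; \sum_{j,\ell} \la^{j}\bigl(Q_{j,\ell}\circ\delta_{1/\la}\bigr)\,X_{j,\ell}.
\]
By uniqueness of the expansion in the basis $\{X_{j,\ell}\}$ we conclude $Q_{j,\ell}\circ\delta_\la = \la^j\,Q_{j,\ell}$ for every $\la>0$, which combined with the polynomial character from the previous step proves the claim. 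I expect the only delicate point to be the bookkeeping in the BCH expansion that establishes the triangular form of $M$; once this is in place, both the polynomial and the homogeneity conclusions follow mechanically from the scaling properties already recorded in Lemma \ref{DilationsProperties} and Lemma \ref{L:xjhom}.
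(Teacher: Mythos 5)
Your argument is correct and follows exactly the route the paper indicates for Proposition \ref{P:Z} (which it does not prove in detail but attributes to \cite{DG2}, noting that it follows ``from the representation \eqref{Z1} and the Baker--Campbell--Hausdorff formula''): you use BCH to obtain the unipotent triangular form of the left-invariant frame with coefficients homogeneous of degree $i-j$, invert it polynomially against the coordinate expression \eqref{Z1} of $\Z$, and read off the homogeneity of the $Q_{j,\ell}$ from the scaling of $\Z$ and the $X_{j,\ell}$. The push-forward argument for the homogeneity is clean and consistent with the conventions of Lemma \ref{L:xjhom}, so no gaps remain.
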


In particular, Proposition \ref{P:Z} shows that $\Z$ involves differentiation not just in the horizontal directions, but in any layer of the stratification of $\cg$. In particular, the vector field $\Z$ is not horizontal.

\medskip

We recall the following result established in \cite{D-G}.

\begin{prop}\label{P:commutator}
In a Carnot group $\G$ with homogeneous dimension $Q$, the infinitesimal generator of group dilations $\Z$ enjoys the following properties:
\begin{itemize}
\item[(i)]
$\operatorname{div}_{\G} \Z \equiv Q$.
\item[(ii)]
Let $\{X_1,...,X_m\}$ be as in \eqref{xj} above. Then, for any $i=1,...,m$ one has
\[
[X_i,\Z] = X_i.
\]
\item[(iii)]
If $f \in C^{3}(\G)$, then
\[
\Delta_\cH (\Z f) = \Z(\Delta_\cH f) + 2 \Delta_\cH f.
\]
\item[(iv)]
In particular, $\Z f$ is $\Delta_\cH$-harmonic, if such is $f$.
\end{itemize}
\end{prop}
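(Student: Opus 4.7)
The plan is to dispatch the four claims in order, with the key ingredient being (ii); items (iii) and (iv) then follow by purely formal bookkeeping. For (i), I would work directly in exponential coordinates $(u_1,\dots,u_m,u_{2,1},\dots,u_{k,m_k})$, in which the bi-invariant Haar measure on $\G$ coincides (up to a constant) with Lebesgue measure. Consequently $\operatorname{div}_\G$ coincides with the Euclidean divergence in these coordinates, and the explicit formula \eqref{Z1} gives
\[
\operatorname{div}_\G \Z \;=\; \sum_{j=1}^m 1 \;+\; 2 \sum_{s=1}^{m_2} 1 \;+\; \cdots \;+\; k \sum_{\ell=1}^{m_k} 1 \;=\; \sum_{j=1}^k j\, m_j \;=\; Q,
\]
which is exactly the definition \eqref{QG} of the homogeneous dimension.

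For (ii) I would leverage the degree-one homogeneity of $X_i$ furnished by Lemma \ref{L:xjhom}, namely $X_i(\delta_\lambda f)=\lambda\,\delta_\lambda(X_i f)$. Differentiating this identity in $\lambda$ at $\lambda=1$ and invoking the definition \eqref{Z} of $\Z$ as the infinitesimal generator of $\{\delta_\lambda\}$ (so that $\frac{d}{d\lambda}\big|_{\lambda=1}\delta_\lambda g=\Z g$ for every smooth $g$), one obtains
\[
X_i(\Z f) \;=\; X_i f \;+\; \Z(X_i f),
\]
i.e.\ $[X_i,\Z]=X_i$ as operators on $C^\infty(\G)$.

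For (iii) I would simply iterate (ii). Using $X_i\Z=\Z X_i+X_i$ once inside $X_i^2\Z f$ and then again inside $X_i\Z(X_i f)$ yields
\[
X_i^2(\Z f) \;=\; X_i\bigl(\Z(X_i f)+X_i f\bigr) \;=\; \Z(X_i^2 f) \;+\; 2\,X_i^2 f,
\]
and summing over $i=1,\dots,m$ gives $\Delta_\cH(\Z f)=\Z(\Delta_\cH f)+2\Delta_\cH f$. Claim (iv) is then immediate: if $\Delta_\cH f=0$ the right-hand side vanishes, so $\Z f$ is $\Delta_\cH$-harmonic.

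There is no serious obstacle in this argument, since the whole proposition is driven by the degree-one homogeneity of the horizontal vector fields together with the chain rule. The only subtlety worth flagging is the identification in (i) of $\operatorname{div}_\G$ with the Euclidean divergence in exponential coordinates; once one accepts that the push-forward of Lebesgue measure under $\exp$ gives a (bi-invariant) Haar measure on the simply-connected nilpotent group $\G$, the computation is a one-liner.
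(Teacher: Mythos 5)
Your proof is correct. Note that the paper itself does not prove this proposition; it only recalls it as established in \cite{D-G}, so there is no in-text argument to compare against. Your route is the natural one: (i) follows from the explicit expression \eqref{Z1} once one accepts that in exponential coordinates the Haar measure of a simply-connected nilpotent group is Lebesgue measure, so that $\operatorname{div}_\G$ is the Euclidean divergence; (ii) is exactly the $\lambda$-derivative at $\lambda=1$ of the homogeneity identity of Lemma \ref{L:xjhom} (the interchange of $X_i$ with $\frac{d}{d\lambda}$ is harmless for smooth $f$, and is the only point worth a word of justification); and (iii), (iv) follow by applying (ii) twice and summing, which is why the hypothesis $f\in C^3(\G)$ suffices.
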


The next Euler type result is folklore. Its proof follows along the lines of its Euclidean ancestor.

\begin{lemma}\label{L:hom}
Let $\Om\subset \G$ be a starshaped open set. Then, $f\in C^1(\Om)$ is homogeneous of degree $\kappa\in \mathbb R$ if and only if one has in $\Om$
\[\Z f = \kappa f .\]
\end{lemma}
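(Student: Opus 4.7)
The plan is to analyze the one-parameter family $g(\lambda) = f(\delta_\lambda p)$ for each fixed $p\in\Om$, exploiting the semigroup property $\delta_{\lambda\mu} = \delta_\lambda \circ \delta_\mu$ from Lemma \ref{DilationsProperties}. Because $\Om$ is starshaped with respect to $e$, Definition \ref{D:star} guarantees $\delta_\lambda p \in \Om$ for every $\lambda \in (0,1]$, and by openness of $\Om$ the set $I(p) = \{\lambda>0 : \delta_\lambda p \in \Om\}$ is open and contains $\lambda = 1$ in its interior; hence $g$ is a $C^1$ function on the connected component of $I(p)$ containing $1$, which I will call $J(p)$, and which by starshapedness satisfies $(0,1]\subseteq J(p)$.

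For the forward implication, suppose $f$ is homogeneous of degree $\kappa$. Then $g(\lambda) = \lambda^\kappa f(p)$ on $J(p)$, so differentiating at $\lambda=1$ and recognising the resulting derivative via the defining formula \eqref{Z} of $\Z$ yields $\Z f(p) = g'(1) = \kappa f(p)$. For the converse, I would compute $g'(\lambda)$ at a general $\lambda \in J(p)$ by writing $\delta_{\lambda(1+s)}(p) = \delta_{1+s}(\delta_\lambda p)$ using property (3) of Lemma \ref{DilationsProperties} with $q = \delta_\lambda p \in \Om$, and then using \eqref{Z} applied at $q$. The change of variable $\mu = 1+s$ gives
\[
g'(\lambda) \;=\; \lim_{s\to 0}\frac{f(\delta_{1+s}(q))-f(q)}{\lambda\, s} \;=\; \frac{1}{\lambda}\,\Z f(\delta_\lambda p).
\]
Substituting the hypothesis $\Z f = \kappa f$ turns this into the linear ODE $g'(\lambda) = \kappa\lambda^{-1} g(\lambda)$ on $J(p)$, whose unique solution with the initial condition $g(1) = f(p)$ is $g(\lambda) = \lambda^\kappa f(p)$. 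This is exactly the homogeneity relation $f(\delta_\lambda p) = \lambda^\kappa f(p)$ for every $\lambda \in J(p)$, which is the notion of homogeneity appropriate to the domain $\Om$.

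I do not expect any serious obstacle: the argument reduces, by the group-dilation identity, to a one-dimensional Euler ODE along each orbit. The only points requiring care are the domain bookkeeping (making sure the orbit stays in $\Om$, which starshapedness provides at least for $\lambda \le 1$) and keeping straight that the calculation of $g'(\lambda)$ reduces correctly, via the semigroup property, to a derivative at the identity element of the dilation group, so that the infinitesimal generator $\Z$ can be invoked at the shifted point $\delta_\lambda p$ rather than at $p$.
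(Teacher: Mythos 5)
Your argument is correct and is precisely the one-dimensional Euler ODE argument along dilation orbits that the paper has in mind when it calls the lemma ``folklore'' whose ``proof follows along the lines of its Euclidean ancestor''; the computation $g'(\lambda)=\lambda^{-1}\Z f(\delta_\lambda p)$ via the semigroup property and the integration of the resulting linear ODE are exactly the intended steps. One small simplification: starshapedness makes the set $I(p)=\{\lambda>0:\delta_\lambda p\in\Om\}$ automatically an interval of the form $(0,\sup I(p))$ or $(0,\sup I(p)]$ (apply Definition \ref{D:star} to $q=\delta_\lambda p$), so your connected component $J(p)$ is all of $I(p)$ and no bookkeeping is lost.
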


The following notion alternative to Definition \ref{D:star} above is Definition 3.2 in \cite{D-G}. The inner product is that in a left-invariant Riemannian metric in $\G$.

\begin{defi}\label{D:alt}
Let $\Om\subset \G$ be a bounded $C^1$ domain containing the identity $e$. We say that $\Om$ is \emph{starshaped} if denoting by $\nu$ the (Riemannian) outer normal field to $\Om$, one has on $\p \Om$ 
\begin{equation}\label{znu}
<\Z,\nu> \ge 0.
\end{equation}
When the strict inequality holds in \eqref{znu}, then we say that $\Om$ is \emph{strictly starshaped}.
\end{defi}

We observe that if $\Om$ is a bounded $C^1$ domain, the divergence theorem and (i) in Proposition \ref{P:commutator} imply
\[
\int_{\p \Om} <\Z,\nu> dH_{N-1} = \int_{\Om} \operatorname{div}_\G \Z dp = Q \operatorname{Vol}_\G(\Om),
\]
where we have denoted by $dH_{N-1}$ the $(N-1)-$dimensional Hausdorff  measure in $\G$ restricted to $\p \Om$.
We infer that we cannot have $<\Z,\nu> \le 0$ everywhere on $\p \Om$. Therefore, for any bounded $C^1$ domain
there exist $p_0\in \p \Om$ and an open set $U$ such that $<\Z,\nu> > 0$ at every $p\in \p\Om\cap U$.

The next connection between starshapedeness and the vector field $\Z$ was found in \cite{DG2}.

\begin{prop}\label{P:zstar}
Let $\Om\subset \G$ be a bounded $C^1$ domain. One has the following implications: 
\begin{itemize}
\item[(i)] If $\Om$ is starshaped according to Definition \ref{D:star}, then it is so according to Definition \ref{D:alt}.
\item[(ii)] If $\Om$ is strictly starshaped according to Definition \ref{D:alt}, then it is starshaped according to Definition \ref{D:star}.
\end{itemize}
\end{prop}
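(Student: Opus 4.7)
The plan is to treat the two implications separately, always exploiting the fact that definition \eqref{Z} identifies $\Z(p_0)$ with the tangent vector at $\lambda = 1$ of the smooth curve $\lambda \mapsto \delta_\lambda(p_0)$, and that the dilation family satisfies the one-parameter group law of Lemma \ref{DilationsProperties}(3), so that for any $\lambda^* > 0$ one may reparametrize
\[
\delta_\mu(p) = \delta_{\mu/\lambda^*}\bigl(\delta_{\lambda^*}(p)\bigr).
\]

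For (i), I would fix $p_0 \in \p \Om$ and approximate it by points $p_n \in \Om$. Since $\Om$ is starshaped in the sense of Definition \ref{D:star}, Lemma \ref{DilationsProperties} and the reformulation $\delta_\la(\Om)\subset \Om$ for $\la\in[0,1]$ give $\delta_\lambda(p_n) \in \Om$ for every such $\lambda$, and passing to the limit yields $\delta_\lambda(p_0) \in \overline \Om$ for all $\lambda\in[0,1]$. I would then introduce a local signed distance function $d$ to $\p \Om$, negative inside, with $\nabla d = \nu$ on $\p \Om$, and study $h(\lambda) = d(\delta_\lambda(p_0))$. By construction $h \le 0$ on $[0,1]$ and $h(1) = 0$, so the left derivative of $h$ at $\lambda = 1$ is nonnegative. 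The chain rule, together with $\frac{d}{d\lambda}\big|_{\lambda=1}\delta_\lambda(p_0) = \Z(p_0)$, then delivers exactly $\langle \Z(p_0), \nu(p_0) \rangle \ge 0$, which is \eqref{znu}.

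For (ii), the natural strategy is a contradiction argument based on the semigroup relation. Fix $p \in \Om$ and set
\[
\lambda^* = \inf\bigl\{\lambda \in [0,1] : \delta_\mu(p) \in \Om \text{ for every } \mu \in [\lambda, 1]\bigr\}.
\]
The goal is to show $\lambda^* = 0$. If not, openness of $\Om$ and continuity in $\mu$ force $p_* := \delta_{\lambda^*}(p) \in \p \Om$, while $\delta_\mu(p) \in \Om$ for all $\mu \in (\lambda^*, 1]$. Writing $\delta_\mu(p) = \delta_{\mu/\lambda^*}(p_*)$ and differentiating at $\mu = \lambda^*$ produces the tangent vector $(1/\lambda^*)\,\Z(p_*)$; the strict inequality $\langle \Z(p_*), \nu(p_*) \rangle > 0$ then shows this vector points strictly outward, so $\delta_\mu(p) \notin \overline \Om$ for $\mu$ slightly above $\lambda^*$. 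This contradicts $\delta_\mu(p) \in \Om$ on $(\lambda^*, 1]$, forcing $\lambda^* = 0$ and hence Definition \ref{D:star}.

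The main delicate point is making the contradiction in (ii) fully rigorous: one must argue that the strictly outward tangent at $p_* \in \p \Om$, combined with the $C^1$ regularity of $\p \Om$, really does make the curve $\mu \mapsto \delta_\mu(p)$ exit $\overline \Om$ immediately after $\lambda^*$. This is precisely where the strict inequality in Definition \ref{D:alt} is used, and it also explains why strictness is not needed in (i): if $\Z$ were merely tangent to $\p \Om$ at some boundary point, the orbit could still graze or re-enter $\Om$ along higher-order directions, so no converse to (i) is available without strictness.
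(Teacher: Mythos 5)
Your proof is correct and follows essentially the same route as the paper: for (i), a one-sided difference quotient of a $C^1$ defining (or signed-distance) function along the dilation orbit at a boundary point, and for (ii), a first-crossing contradiction at a boundary point $p_*$ of the orbit combined with the strict inequality $\langle\Z,\nu\rangle>0$ and the reparametrization $\delta_\mu(p)=\delta_{\mu/\la^*}(p_*)$. The only (cosmetic) difference is that in (ii) you track the infimum of the parameters for which the tail of the orbit stays inside $\Om$, rather than the supremum of those for which the orbit point lies outside as the paper does; this guarantees the orbit lies in $\Om$ on a full one-sided interval of the crossing parameter, which makes the contradiction slightly tidier but does not change the argument.
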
 

\begin{proof}
Since $\Om$ is $C^1$, given a point $p_0\in \p \Om$ there exists an open set $U$ containing $p_0$ and a local defining function $\rho\in C^1(U)$ such that $\nabla \rho(p)\not= 0$ for every $p\in U$, and for which 
\[
\Om \cap U = \{p\in U\mid \rho(p)<0\},\ \ \ \ \ \ \ \p \Om \cap U = \{p\in U\mid \rho(p) = 0\}.
\]
Therefore, the outer unit normal to $\p \Om$ in $p_0$ is given by $\nu = \nabla \rho(p_0)/|\nabla \rho(p_0)|$. We have
\begin{equation}\label{Znu}
<\Z,\nu> = \frac{<\Z,\nabla \rho(p_0)>}{|\nabla \rho(p_0)|} = \frac{\Z\rho(p_0)}{|\nabla \rho(p_0)|}.
\end{equation}
By \eqref{Z} we know that
\[
\mathscr Z \rho(p_0) = \lim_{\lambda\to 1} \frac{\rho\big(\delta_\lambda (p_0)\big) - \rho(p_0)}{\lambda - 1} = \lim_{\lambda\to 1} \frac{\rho\big(\delta_\lambda (p_0)\big)}{\lambda - 1}. 
\]
(i) If now $\Om$ is starshaped according to Definition \ref{D:star}, then one easily verifies that such is also $\overline{\Om}$. This implies that for some small $\ve>0$  the point $\delta_\la(p_0)$ with $1-\ve\le \la < 1$ belongs to $\Om\cap U$. Therefore, the quotient $\frac{\rho\big(\delta_\lambda (p_0)\big)}{\lambda - 1}$ is $\ge 0$. Passing to the limit as $\la \to 1^-$ we conclude that $\Z\rho(p_0) \ge 0$. By \eqref{Znu}, this implies that $<\Z,\nu> \ge 0$ at $p_0$.

(ii) By hypothesis we know that $<\Z,\nu> \ge 0$ at any point $p_0\in \p \Om$. We want to prove that $\Om$ must be starshaped according to Definition \ref{D:star}. We argue by contradiction and assume that there exist $p\in\Om$ such that for some $0<\bar \la<1$ the point $p_0 \overset{def}{=} \delta_{\bar \la}(p) \not\in \Om$. Let 
\[
\la_1 = \sup \{\la\ge 1\mid \delta_\la(p_0)\not\in \Om\}.
\]
Notice that such sup must be finite since the set of competing $\la$'s is bounded above by ${\bar \la}^{-1} >1$. In fact, we have $\delta_\la(p_0) = \delta_{\la \bar \la}(p) = p\in \Om$, when $\la = {\bar \la}^{-1}$.
We must have $p_1 \overset{def}{=} \delta_{\la_1}(p_0)\in \p \Om$, while $\delta_\la(p_1)\not\in \Om$ for all $\la<1$ sufficiently small. If $\rho$ is a local defining function for $\Om$ at $p_1$ as above, we thus have $\rho(\delta_\la(p_1)) >0$ for such $\la$'s, and therefore
\[
\frac{\rho(\delta_\la(p_1)) - \rho(p_1)}{\la -1} = \frac{\rho(\delta_\la(p_1))}{\la -1} <0.
\]
Passing to the limit for $\la \to 1^-$ we conclude that it must be $\Z\rho(p_1)\le 0$. This contradicts the assumption $<\Z,\nu> > 0$. 
\end{proof}

Let us notice explicitly than if $\rho$ is a (global) defining function for $\Om$, i.e., there exists an open neighborhood $U$ of $\p \Om$ such that $\rho\in C^1(U)$, and 
\[
\Om \cap U = \{p\in U\mid \rho(p)<0\},
\]
then a non-unit outer normal to $\p \Om$ is given by $\nabla \rho$, and thus the condition \eqref{znu} is equivalent to 
\begin{equation}\label{znu2}
\Z \rho = <\Z,\nabla \rho> \ge 0\ \ \ \ \ \ \ \ \text{on}\ \p \Om.
\end{equation}

\medskip

An important property of horizontal Laplaceans in Carnot groups follows from Theorem 2.1 in Folland's seminal paper \cite{Fo}. Such result provides a first fundamental example of starshaped sets, namely the superlevel sets of the fundamental solution. 

\begin{prop}\label{P:folland}
Let $\G$ be a Carnot group and let $\Delta_\cH$ be the horizontal Laplacean associated with an orthonormal basis of the horizontal layer $\cg_1$ of its Lie algebra. Then, the unique positive fundamental solution $E$ of $-\Delta_\cH$ with singularity at $e$ and vanishing at infinity is homogeneous of degree $2-Q$, where $Q$ is the homogeneous dimension of $\G$ as in \eqref{QG}.
\end{prop}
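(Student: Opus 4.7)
My plan is to reduce homogeneity to uniqueness via a scaling argument. Folland's Theorem 2.1 (cited in the statement) provides existence and uniqueness of a positive fundamental solution $E$ of $-\Delta_\cH$ with singularity at $e$ that vanishes at infinity; I will take existence for granted to fix such $E$ and invoke uniqueness at the final step.

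For each $\la>0$, define $E_\la(p) := \la^{Q-2} E(\delta_\la p)$. The key claim is that $E_\la$ is itself a positive fundamental solution of $-\Delta_\cH$ with singularity at $e$ and vanishing at infinity. Positivity is immediate from $\la^{Q-2}>0$ and positivity of $E$; the vanishing at infinity follows because $\delta_\la$ is a homeomorphism of $\G$ that sends infinity to infinity. To verify that $-\Delta_\cH E_\la$ equals the Dirac mass at $e$ in the distributional sense, one tests against $\va\in C_c^\infty(\G)$ and tracks three scaling factors in the pairing $\langle E_\la,-\Delta_\cH\va\rangle$: the prefactor $\la^{Q-2}$ from the definition of $E_\la$, the Jacobian $\la^{-Q}$ from the change of variables $q=\delta_\la p$ (the Haar measure on $\G$, which coincides with Lebesgue measure in exponential coordinates, scales as $\la^Q$ with $Q$ as in \eqref{QG}), and a factor $\la^{2}$ coming from Lemma \ref{L:xjhom}, which gives $(\Delta_\cH\va)\circ\delta_{\la^{-1}} = \la^2\,\Delta_\cH(\va\circ\delta_{\la^{-1}})$. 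These factors collapse to $1$, and the computation reduces to $\langle -\Delta_\cH E,\va\circ\delta_{\la^{-1}}\rangle = (\va\circ\delta_{\la^{-1}})(e) = \va(e)$.

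Applying the uniqueness part of Folland's theorem then forces $E_\la \equiv E$, i.e.\ $E(\delta_\la p) = \la^{2-Q} E(p)$ for every $\la>0$ and $p\in\G$, which is precisely homogeneity of degree $2-Q$. There is no substantive obstacle here beyond the bookkeeping: the degree $2-Q$ is dictated by the compatibility between the degree $2$ of $\Delta_\cH$ and the homogeneous dimension $Q$ attached to the non-isotropic dilations $\{\delta_\la\}_{\la>0}$. The only non-scaling input is Folland's uniqueness statement itself, which in turn rests on a Liouville-type property for $\Delta_\cH$-harmonic functions vanishing at infinity, so that the difference of two candidate fundamental solutions (which is globally $\Delta_\cH$-harmonic and vanishes at infinity) must be identically zero.
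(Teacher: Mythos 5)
Your argument is correct. The paper itself gives no proof of this proposition: it simply attributes the statement to Theorem 2.1 of Folland's paper \cite{Fo}. The scaling-plus-uniqueness argument you supply is the standard way to see the homogeneity and your bookkeeping is right: the exponents $\la^{Q-2}\cdot\la^{-Q}\cdot\la^{2}=1$ (prefactor, Haar-measure Jacobian under $\delta_\la$, and the degree-two homogeneity of $\Delta_\cH$ from Lemma \ref{L:xjhom}) show that $E_\la(p)=\la^{Q-2}E(\delta_\la p)$ is again a positive fundamental solution with pole at $e$ vanishing at infinity, and the uniqueness already built into the statement of the proposition forces $E_\la\equiv E$, i.e.\ $E(\delta_\la p)=\la^{2-Q}E(p)$. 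The one ingredient you correctly flag as external is the uniqueness itself (hypoellipticity makes the difference of two candidates smooth and globally harmonic, after which a Liouville/maximum-principle argument kills it), which is exactly what Folland's theorem provides.
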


\begin{prop}\label{P:cor}
Let $\G$, $\Delta_\cH$ and $E$ be as in Proposition \ref{P:folland}. Then, all the superlevel sets of the fundamental solution $E$,
\[
\Om_\ell = \{p\in \G\mid E(p)\ge \ell\},\ \ \ \ \ \ \ \ \ \ \ 0<\ell<\infty,
\]
are strictly starshaped and in fact their boundaries are $C^\infty$ compact manifolds of codimension one. 
\end{prop}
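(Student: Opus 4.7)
The plan is to combine the homogeneity of $E$ (Proposition \ref{P:folland}) with Lemma \ref{L:hom} to obtain an Euler-type identity for $E$ relative to $\Z$, and then read off both the smoothness of the level sets and the strict inequality of Definition \ref{D:alt} from that single identity, finally invoking Proposition \ref{P:zstar}(ii).

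Since $E$ is positive, smooth on $\G\setminus\{e\}$, and homogeneous of degree $2-Q$ by Proposition \ref{P:folland}, Lemma \ref{L:hom} yields
\[
\Z E \;=\; (2-Q)\, E \qquad \text{in } \G\setminus\{e\}.
\]
The standing assumption that the step $k\ge 2$ forces $m_1\ge 2$ (otherwise $[\cg_1,\cg_1]=\{0\}$ contradicts $\cg_2\ne\{0\}$) and $m_2\ge 1$, so $Q\ge m_1+2m_2\ge 4$; in particular $Q-2>0$. Fix now $\ell>0$. The set $\Om_\ell=\{E\ge\ell\}$ is closed in $\G$ (with $E(e)=+\infty$) and bounded, because the homogeneity $E(\delta_\la p)=\la^{2-Q}E(p)$ together with $2-Q<0$ forces $E(p)\to 0$ as $p\to\infty$; hence $\Om_\ell$ is compact. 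Choose $\rho:=\ell-E$ as a defining function for $\Om_\ell$ on a neighborhood of $\p\Om_\ell=\{E=\ell\}$. The Euler identity then gives
\[
\Z\rho \;=\; -\Z E \;=\; (Q-2)\,E \;=\; (Q-2)\,\ell \;>\; 0 \qquad \text{on } \p\Om_\ell.
\]

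This one strict inequality yields both assertions of the proposition. First, since $\Z$ is a smooth first-order differential operator, one has pointwise $\Z\rho=\sum_i a_i\,\p_i\rho$, so $\nabla\rho(p_0)=0$ would force $\Z\rho(p_0)=0$; the inequality above therefore forbids $\nabla\rho$ from vanishing anywhere on $\p\Om_\ell$. Hence $\p\Om_\ell$ is a regular level set of the $C^\infty$ function $E$ on $\G\setminus\{e\}$, and the implicit function theorem makes it a $C^\infty$ codimension-one submanifold of $\G$; compactness of $\p\Om_\ell$ is inherited from that of $\Om_\ell$. Second, by \eqref{znu2} the strict positivity of $\Z\rho$ on $\p\Om_\ell$ is exactly the condition that $\Om_\ell$ be strictly starshaped in the sense of Definition \ref{D:alt}; part (ii) of Proposition \ref{P:zstar} then upgrades this to starshapedness in the sense of Definition \ref{D:star}.

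No step looks genuinely difficult once the Euler identity $\Z E=(2-Q)E$ is in hand: the argument is largely bookkeeping. The one place requiring a moment of care is the boundedness (hence compactness) of $\Om_\ell$, which crucially uses the negativity of the homogeneity exponent $2-Q$; and the clean deduction $\nabla\rho\ne 0$ from $\Z\rho\ne 0$, which relies only on $\Z$ being first order and not on any nondegeneracy of its coefficients.
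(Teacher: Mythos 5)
Your proof is correct and follows essentially the same route as the paper's: both rest on the Euler identity $\Z E=(2-Q)E$ from Lemma \ref{L:hom} and Proposition \ref{P:folland}, deduce that $\nabla E$ cannot vanish on $\{E=\ell\}$ because $\Z$ is first order, and read off strict starshapedness from the sign of $\Z E$. The only cosmetic difference is that the paper invokes Bony's strong maximum principle for the positivity of $E$ and the compactness of $\Om_\ell$, whereas you extract these from the positivity and decay at infinity already asserted in Proposition \ref{P:folland}; both are fine.
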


\begin{proof}
Lemma \ref{L:hom} and Proposition \ref{P:folland} imply that in $\G\setminus \{e\}$
\begin{equation}\label{ZE}
\Z E = (2-Q) E.
\end{equation}
We observe now that Bony's strong maximum principle in \cite{Bony} implies that $E(p)>0$ for every $p\not= e$. Combining this observation with \eqref{ZE} we see that $\Z E(p) <0$ for every $p\not= e$. This implies that
at no point $p\in \G\setminus\{e\}$ one can have $\nabla E(p) = 0$. Otherwise, by \eqref{Z1} we would have at that point $\Z E(p) = 0$. Since again by Bony's strong maximum principle every superlevel set $\Om_\ell$ is a compact set, we conclude that $\p \Om_\ell$ is a $C^\infty$ compact manifold of codimension one. Finally, $\Z E <0$ in $\G\setminus \{e\}$ implies that $\Om_\ell$ is strictly starshaped.
 
\end{proof}

\medskip

\begin{rem}
We mention that the counterpart of Folland's Proposition \ref{P:folland} for the horizontal $q$-Laplacean operator $\Delta_{\cH,q}$ in \eqref{hplap} represents a challenging open question, see the Open Problem in Section 3.5 in \cite{Gems}. Besides linearity, a crucial tool in the proof of Proposition \ref{P:folland} is H\"ormander's hypoellipticity theorem. For the nonlinear operator $\Delta_{\cH,q}$ one possible way to circumvent these obstructions is by means of $C^{1,\alpha}$ estimates, but as we have indicated above this is not known in general Carnot groups.  
\end{rem}

\medskip

Besides the level sets of the fundamental solution of $\Delta_\cH$ in a Carnot group $\G$ further basic examples of (strictly) starshaped sets are the so-called \emph{gauge balls}. Let $||\cdot ||$ denote the Euclidean distance to the origin in the Lie algebra $\cg$. For $u = u_1 + \cdots + u_k \in \cg$, $u_i\in \cg_i$, one defines
\begin{equation}\label{gauge}
|u|_{\cg} = \left(\sum_{i=1}^k ||u_i||^{2r!/i}\right)^{2r!}.
\end{equation}
The \emph{non-isotropic gauge} in $\G$ is defined by letting 
\begin{equation}\label{nig}
|p|_{\G} = |\exp^{-1} p|_{\cg}, \quad\quad \ \ \ \ \ p\in \G,
\end{equation}
see \cite{Fo} and \cite{FS}. Since the exponential mapping $\exp:\cg \to \G$ is a $C^\infty$ (in fact, $C^\omega$) diffeomorphism, it is clear that $p\to |p|_\G$ is $C^\infty(\G\setminus \{e\})$.
In the sequel we will omit the subscript $\G$, and simply write $|p|$ for the gauge \eqref{nig} in $\G$. One easily recognizes from \eqref{dilg}, \eqref{dilG} and \eqref{nig} that the gauge $|\cdot|$ is homogeneous of degree one, i.e.,  
\begin{equation}\label{homgauge}
|\delta_\lambda p| = \lambda |p|.
\end{equation}
If we thus set $\rho(p) = |p|$, then according to Lemma \ref{L:hom} one has in $\G\setminus \{e\}$
\[
\Z \rho = \rho.
\]
In view of Definition \ref{D:alt} this proves that the gauge ball centered at $e\in \G$ and of radius $r$,  defined by
\begin{equation}\label{gb}
B_\G(e,r) = \{p\in \G\mid |p|<r\},
\end{equation}
is strictly starshaped. By (ii) of Proposition \ref{P:zstar} it is also starshaped according to Definition \ref{D:star}.
However, as we have already mentioned in Section \ref{S:intro} the situation is much subtler than in the Euclidean setting. Even in the prototypical framework of the step two Heisenberg group $\Hn$ there exists a continuum of points in the gauge ball $B_{\Hn}(e,r)$ with respect to which such a set is not starshaped in the sense of Definition \ref{D:star}. This quite surprising phenomenon was first discovered in unpublished 1998 joint work of the second named author with L. Capogna and D. Danielli, and full details will appear in \cite{DG2}. This example shows that the geometric notion of \emph{weak horizontal convexity} in general Carnot groups introduced in Definition 5.5 in \cite{D-G-N} is dramatically different from the standard Euclidean notion of convexity since, for the latter, every convex set is trivially starshaped with respect to any of its points. On the other hand, it was proved in \cite{D-G-N} that the gauge balls in $\Hn$ (or in any group of Heisenberg type) are weakly horizontally convex. Furthermore, these sets are also Euclidean convex and they have real analytic boundary. 

With regard to the theory of weak horizontal convexity developed in \cite{D-G-N} we mention that it was proved in Theorem 5.12 in the same paper that for a function $f$ in the Folland-Stein's class $\Gamma^2(\G)$ one has
\[
f\ \text{is weakly horizontally convex}\ \Longleftrightarrow\  \nabla^2_\cH f \ge 0,
\] 
where $\nabla^2_\cH f$ denotes the symmetrized horizontal Hessian of $f$ as in \eqref{hessian} above (with respect to any orthonormal basis of the horizontal layer $\cg_1$). In this connection we mention that in \cite{ManfrediHeis} the authors independently introduced in the Heisenberg group $\Hn$ a viscosity notion of horizontal convexity, called $v$-\emph{convexity}, which requires that  $\nabla^2_\cH f\ge 0$ in the viscosity sense. It was subsequently proved in \cite{ManfrediCarnot} that:
\[
\text{weak horizontal convexity} \ \Longleftrightarrow\ \text{$v$-convexity}.
\]
For a generalization of the notions of weak horizontal convexity and $v$-convexity to a system of smooth vector fields we refer the reader to \cite{BD1}, \cite{BD2}.  


\section{Proof of the main theorem}\label{S:proof}

In this section we prove our main result, Theorem \ref{teoremaPuntoGenerico}. In a Carnot group $\G$ we consider the problem \eqref{HorCapacity_intro} in a \emph{condenser} $(\Omega_0,\Omega_1)$ given by two connected bounded open sets $\Omega_1\subset \overline \Omega_1 \subset \Omega_0 \subset \G$, and we define $\Omega = \Omega_0 \setminus \overline \Omega_1$. We assume that there exist a point $p_0\in \Om_1$ with respect to which $\Om_1$ and $\Om_0$ are starshaped in the sense of Definition \ref{D:star}.

\medskip

\medskip

We consider the fully nonlinear partial differential equation in $\Omega$
\begin{equation}\label{PDE}
\mathscr F\big(p,f,\nh f(p),\nh^2 f(p)\big)=0.
\end{equation}

Given $f, \vf\in C(\Om)$ and $\bar{p}\in \Om$ we say that $\vf$ \emph{touches} $f$ \emph{from above} at $\bar{p}$ if $\vf(\bar{p}) = f(\bar{p})$ and $\vf(p) \ge f(p)$ for any $p$ in a neighborhood of $\bar{p}$. Similarly, we say that $\vf$ \emph{touches} $f$ \emph{from below} at $\bar{p}$ if $\vf(\bar{p}) = f(\bar{p})$ and $\vf(p) \le f(p)$ for any $p$ in a neighborhood of $\bar{p}$

\begin{defi}\label{Viscosity solutions}
We say that $f\in C(\Om)$ is a 
 \emph{viscosity subsolution} of \eqref{PDE} at the point $\bar{p}\in \Omega$ if for every $\varphi\in C^2(\Omega)$ that touches $f$ from above at  $\bar{p}$ one has
$$
\mathscr F\big (\bar{p},f(\bar{p}),\nh \varphi(\bar{p}),\nh^2 \varphi(\bar p)\big)\geq 0.
$$
We say that $f\in C(\Om)$ is a \emph{viscosity supersolution} of \eqref{PDE} at $\bar{p}\in \Omega$ if for every $\psi\in C^2(\Omega)$ that touches $f$ from below at $\bar{p}$ one has
$$
\mathscr F\big (\bar{p},f(\bar{p}),\nh \psi(\bar{p}),\nh^2 \psi(\bar{p})\big)\leq 0.
$$
\item We say that $f\in C(\Om)$ is a \emph{viscosity solution} of \eqref{PDE} in $\Omega$ if it is both a viscosity subsolution and a viscosity supersolution at any point $\bar{p}\in \Omega$.
\end{defi}

For the notion of viscosity solution in the classical setting of $\R^N$ we refer the reader to the landmark paper \cite{Crandall-LionsUserGuide}, see also the books \cite{Caffarelli} and \cite{Bardi-Capuzzo}. 
For the definition of viscosity solution in the case of the $q$-Laplacean we refer to \cite{tom} (Section 3.3, case $\varepsilon=0$) or  \cite{Bieske_P}. In particular, in Section \ref{SS:infty} we have emphasized that the classes of viscosity and weak solutions of the $q$-Laplacean coincide (see Definition 3 and Lemma 5.5 in \cite{Bieske_P}). As a consequence, in Definition \ref{Viscosity solutions} we do not need to worry about points at which the horizontal gradient of the testing function $\varphi\in C^2(\Omega)$ vanishes. Such points, are dealt with exactly as in the Euclidean case by the notion of weak solution.

In Section \ref{S:models} we have already discussed the existence of viscosity solutions to the problem \eqref{HorCapacity_intro} in the given examples. 
Here we simply note that, whenever constant functions are solutions of \eqref{PDE}, then the comparison principle implies the maximum principle, which gives in particular that 
$
0\leq f\leq 1 $ on $\overline{ \Omega}$, when the boundary conditions in  \eqref{HorCapacity_intro} are in force.

\medskip

We are ready to prove our main result. 

\vskip 0.2in

\begin{proof}[Proof of Theorem \ref{teoremaPuntoGenerico}]

We give the proof only in the case $p_0=e$.  i.e. when the sets $\Om_i$ are starshaped with respect to the group identity; the general case can be obtained with obvious suitable adaptations (essentially consisting in substituting any occurrence of $\delta_{\lambda}$ with $\delta^{p_0}_\lambda$), thanks to the invariance with respect to the left-translations of the vector fields $X_j$.

\medskip

Then let $p_0=e$. For a given $0<\ell<1$ we consider the closed superlevel set of $f$,
 \[
 \Om_\ell = \{p\in \overline \Om_0\mid f(p)\geq\ell\}.
 \]
According to Definition \ref{D:star} we need to prove that, if we define the \emph{starshaped hull} of $\Om_\ell$ as 
\begin{equation}\label{p1}
\Omega^\star_\ell \overset{def}{=} \bigcup_{\la\in (0,1]}\delta_\la\big(\Omega_\ell\big),
\end{equation}
then we have $\Om_\ell = \Omega^\star_\ell$. 

Notice that, since $e\in\Omega_1$ and $\Omega_0$ is bounded, there exists $\Lambda>1$ such that $\Omega_0\subset\delta_\Lambda(\Omega_1)$. Then we consider the following function, named the \emph{starshaped  envelope} of $f$:
\begin{equation}\label{StartshapedFunction}
f^\star(p) \overset{def}{=} \sup \left\{
f\big(\delta_\la(p)\big)\,|\la\in[1,\Lambda],\, p\in \delta_{\la^{-1}}\left(\overline{\Omega}_0
\right)\right\}\quad\text{ for } p\in\overline\Omega_0\,.
\end{equation}
Since both $f$ and the dilations are continuous functions, the supremum in \eqref{StartshapedFunction} is actually a maximum and $f^\star$ is continuous in $\overline\Omega_0$. Given $p\in\overline\Omega_0$, let us call $\overline{\la}=\overline{\la}({p})$ the value that realizes the maximum for $f^\star({p})$, i.e. $\overline{\la}\geq 1$ is such that
$$
f^\star({p})=f(\delta_{\overline{\la}}
({p})).
$$
In general the value at which the maximum is attained may not be unique and in such a case we take the smallest one, that is
\begin{equation}\label{defla}
\overline\lambda(p)=\min\{\overline \lambda\geq 1\,:\, f^\star(p)=f(\delta_{\overline{\la}}
({p}))\}\,.
\end{equation}

Note that  $0\leq f\leq 1$ implies $0\leq f^\star\leq 1$. Furthermore, 
 for every $p \in \overline{\Om}_0$ one has $p = \delta_1(p)$, we infer that the value $\la = 1$ is admissible in the definition of $f^\star(p)$, and thus it is trivially true that
\begin{equation}\label{ff}
 f \le f^\star.
 \end{equation}
Since $f \equiv 1$ on $\overline{\Om}_1$, this immediately yields
 \begin{equation}\label{f*1}
 f^\star=1\quad\text{in }\overline\Omega_1\,.
 \end{equation}
Notice also that for every $p\in\overline\Omega_1$, it holds $\overline\la(p)=1$.
 
Concerning the properties of the starshaped hull and envelope,  here we only need to notice that  $\Omega^\star_\ell$ is the $\ell$ superlevel set for $f^\star$, i.e.,
\begin{equation}\label{stars}
\Omega^\star_\ell=\{p\in \overline \Om_0\mid f^\star(p)\geq \ell\}.
\end{equation}
Indeed, if $p\in\Omega_\ell^\star$, then by \eqref{p1} we have $p\in\delta_\mu(\Omega_\ell)$ for some $\mu\in(0,1]$, i.e. $p=\delta_\mu(q)$ for some $q\in\Omega_\ell$. Set $\la=\mu^{-1}\geq 1$, then we have $q=\delta_\la(p)$, whence definition \eqref{StartshapedFunction} above gives $f^\star(p)\geq f(\delta_\la(p))=f(q)\geq\ell$, which shows 
\[
\Omega^\star_\ell\subseteq\{p\in \overline \Om_0\mid f^\star(p)\geq \ell\}.
\]
Conversely, assume $f^\star(p)\geq\ell$, and let $\overline\la=\overline\la(p)\geq1$ be as in \eqref{defla}. We have $f(\delta_{\overline\la}(p)) = f^\star(p) \geq\ell$, i.e. $q=\delta_{\overline\la}(p)\in\Omega_\ell$. By setting $\mu=\overline\la^{-1}\in(0,1]$, we can write $p=\delta_\mu(q)\in\delta_\mu(\Omega_\ell)$, which proves
\[
\Omega^\star_\ell\supseteq\{p\in \overline \Om_0\mid f^\star(p)\geq \ell\}.
\]
The latter two inclusions establish \eqref{stars}, and since $\Om_\ell^\star$ is starshaped by definition, we infer that $f^\star$ has starshaped level sets. Our final goal is to prove that 
$f^\star=f$.

\medskip

Since \eqref{ff} holds,  we only need to prove the reverse inequality. 
We will reach this conclusion by the following steps:
\begin{itemize}
\item[(a)] we show that $f^\star$ satisfy the same boundary conditions on $\p \Om$ as $f$;
\item[(b)] we prove that $f^\star$ is a viscosity  subsolution of \eqref{HorCapacity_intro};
\item[(c)] using (a) and (b), we then appeal to the hypothesis \eqref{CP} above to conclude that $f^\star \le f$ in $\Om$.
\end{itemize}

(a) First, we verify for $f^\star$ the two boundary conditions in problem of \eqref{HorCapacity_intro}. In view of \eqref{f*1}, we only need to verify the condition $f^\star = 0$ on $ \partial \Omega_0$. For this we observe that the starshapedness of the open set $\Om_0$ implies
\begin{equation}
\label{claimVenerdi}
p\in \partial \Omega_0\quad
\Longrightarrow
\quad
\delta_{\lambda}(p)\in \G\setminus \Omega_0, \; \ \ \forall\, \lambda\geq 1.
\end{equation}
The claim \eqref{claimVenerdi} trivially implies $f^\star(p)=0$ for every $p\in \partial \Omega_0$.
To prove \eqref{claimVenerdi} we use the starshapedness of the open set $\Omega_0$. Assume by contradiction that there exists a $\overline{\la}>1$ such that $\overline{p}=\delta_{\overline{\la}}(p)\in \Omega_0$. Then, since $\Omega_0$ is starshaped w.r.t. the origin,
we have $\delta_{{\mu}}(\overline{p})\in \Omega_0$ for all $0\leq \mu\leq 1$. Using the properties of dilations and noticing that $0<1/\overline{\la}<1$ we obtain
$$
p=\delta_{\overline{\la}}^{-1}(\overline{p})=\delta_{1/\overline{\la}}(\overline{p})\in \Omega_0,
$$
which contradicts $p\in \partial \Omega_0$. Thus \eqref{claimVenerdi}  is proved.\\

(b) Next, we prove that $f^\star$ is a viscosity subsolution for the equation in  \eqref{HorCapacity_intro}. Let $\overline{p}\in \Omega$ be an arbitrary point, and consider a test function $\varphi\in C^2(\Omega)$ such that $\varphi$ touches $f^\star$ from above at $\overline{p}$. We thus have in a sufficiently small (gauge) neighborhood $B$ of $\overline p$
\[
\vf(\overline p) = f^\star(\overline p),\ \ \ \ \ \ \vf(p) \ge f^\star(p)\ \ \ \forall p\in B.
\]
We want to prove that 
\begin{equation}\label{vis}
\mathscr F\left(
\overline{p},f^\star(\overline{p}),\nh\varphi(\overline{p}),
\nh^2\varphi(\overline{p})
\right)\ge 0.
\end{equation}
This shows that $f^\star$ is a viscosity subsolution of \eqref{HorCapacity_intro} at $\overline p$, and thus in $\Om$ by the arbitrariness of $\overline p$.
 
We define
\begin{equation}
\label{Test_fuction}
\psi(p)=\varphi
\left(
\delta^{-1}_{\overline{\la}}(p)\right)
=\varphi\left(\delta_{1/\overline{\la}}
(p)\right),
\end{equation}
where $\overline{\la}=\overline{\la}(\overline{p})$ is as in \eqref{defla}, so that
\begin{equation}\label{ps1}
f^\star(\overline{p})=f(\delta_{{\overline{\la}}}(\overline{p}))\,.
\end{equation}
Clearly $\psi\in C^2(\Omega)$. Moreover, $\psi$ touches $f$ from above at the point 
$$\widetilde{p}=\delta_{\overline{\la}}(\overline{p})\in \Omega.$$
Indeed, first notice that 
$$\psi(\widetilde{p})=\varphi\left(\delta_{1/\overline{\la}}
(\delta_{\overline{\la}}(\overline{p}))\right)=\varphi(\overline{p})=f^\star(\overline{p})= f(\delta_{\overline{\la}}(\overline{p})) = f(\widetilde{p}),$$
where in the second to the last equality we have used \eqref{ps1}.  
Then, for $p\in B$ the point $q=\delta_{\overline{\la}}(p)$ belongs to the neighborhood $\delta_{\overline{\la}}(B)$ of $\widetilde{p}$, we can deduce
\begin{align*}
f(q)-\psi(q)&=f(\delta_{\overline{\la}}(p))-\varphi(\delta_{1/\overline{\la}}(\delta_{\overline{\la}}(p)))
\leq 
f^\star(p)-\varphi(p)\leq f^\star(\overline{p})-\varphi(\overline{p})=0,
\end{align*}
where we have used that $f^\star$ is defined as the supremum of $f(\delta_{\overline{\la}}(p))$ for all $\la\geq 1$, and that $\varphi$ touches $f^\star$ from above at $\overline{p}$.\\

Since $f$ is a viscosity solution of \eqref{PDE} at $\tilde p$, we infer that
\begin{equation}
\label{Jova}
\mathscr F\big(\widetilde{p},f(\widetilde{p}),\nh \psi(\widetilde{p}),\nh^2 \psi(\widetilde{p})\big)\geq 0.
\end{equation}
To conclude we use Lemma \ref{L:xjhom} and the formula \eqref{laphom} combined with the assumption \eqref{structuralAssumptionF}. The former gives 
\begin{align*}
\nh \psi(\tilde p) & = \frac{1}{\overline \la} \nh \vf(\delta_{\frac{1}{\overline \la}}(\tilde p)) = \frac{1}{\overline \la} \nh \vf(\overline p),\\
\nh^2 \psi(\tilde p) & = \frac{1}{{\overline \la}^2} \nh^2 \vf(\delta_{\frac{1}{\overline \la}})(\tilde p) = \frac{1}{{\overline \la}^2} \nh^2 \vf(\overline p).
\end{align*} 
Using these identities in combination with assumption \eqref{structuralAssumptionF}, we find from \eqref{Jova}
\begin{align*}
0
&\leq 
\mathscr F\big(\delta^{-1}_{\overline{\lambda}}(\widetilde{p}),f(\widetilde{p}),
\lambda\,\nh\psi(\widetilde{p}),
\lambda^2\nh^2 \psi(\widetilde{p})\big)\\
&=
\mathscr F\left(
\delta^{-1}_{\overline{\lambda}}(\delta_{\overline{\la}}(\overline{p})),f^\star(\overline{p}),
\lambda\,\frac{1}{\overline{\la}}\nh\varphi(\overline{p}),
\lambda^2\frac{1}{\overline{\la}^2}\nh^2\varphi(\overline{p})
\right)
=
\mathscr F\left(
\overline{p},f^\star(\overline{p}),\nh\varphi(\overline{p}),
\nh^2\varphi(\overline{p})
\right),
\end{align*}
which establishes \eqref{vis}. 

(c) From (a) and (b) the desired conclusion $f^\star \le f$ follows thanks to the assumption \eqref{CP}.

Since $f = f^\star$ in $\Om$, for any $\ell\in (0,1)$ we have
\[
\Omega_\ell= \{p\in \overline{\Om}_0\,|\, f^\star(p) \geq \ell\} = \Om^\star_\ell.
\]
In view of \eqref{p1} this proves the starshapedness of $\Om_\ell$, thus completing the proof.

\end{proof}

\end{document}